\providecommand{\tabularnewline}{\\}
\numberwithin{equation}{section}
\theoremstyle{plain}
\newtheorem{thm}{\protect\theoremname}
  \theoremstyle{definition}
  \newtheorem{defn}[thm]{\protect\definitionname}
  \theoremstyle{plain}
  \newtheorem{lem}[thm]{\protect\lemmaname}
\newlength{\myArraycolsep}
  \providecommand{\definitionname}{Definition}
  \providecommand{\lemmaname}{Lemma}
\providecommand{\theoremname}{Theorem}
\begin{document}

\title{Line graphs and the transplantation method}

\author{Peter Herbrich}
\begin{abstract}
We study isospectrality for mixed Dirichlet-Neumann boundary conditions,
and extend the previously derived graph-theoretic formulation of the
transplantation method. Led by the theory of Brownian motion, we introduce
vertex-colored and edge-colored line graphs that give rise to block
diagonal transplantation matrices. In particular, we rephrase the
transplantation method in terms of representations of free semigroups,
and provide a method for generating adjacency cospectral weighted
directed graphs.
\end{abstract}

\address{Department of Mathematics, Dartmouth College, Hanover, NH, USA}

\email{peter.herbrich@dartmouth.edu}

\maketitle
\thispagestyle{empty}

\setlength{\skip\footins}{8mm}

\let\thefootnote\relax\footnote{\textbf{Acknowledgments.} I am indebted
to Peter Doyle for his indispensable contributions.}

\begin{minipage}[t]{1\columnwidth}%
\global\long\def\manifold{M}

\global\long\def\laplace#1{\Delta_{#1}}

\global\long\def\graph{G}

\global\long\def\mate#1{#1'}

\global\long\def\other#1{\widetilde{#1}}

\global\long\def\withoutDirichlet#1{#1^{*}}

\global\long\def\lineGraphVertexColored#1{L^{\mathrm{vc}}(#1)}

\global\long\def\lineGraphEdgeColored#1{L^{\mathrm{ec}}(#1)}

\global\long\def\colorEdge{c}

\global\long\def\length{l}

\global\long\def\adjacencyMatrix#1#2{A_{#2}^{#1}}

\global\long\def\incidenceMatrix#1{B_{#1}}

\global\long\def\identityMatrix#1{I_{#1}}

\global\long\def\transplantationMatrix{T}

\global\long\def\transplantationMatrixLineGraphs{T_{L}}

\global\long\def\weight{w}

\global\long\def\colorMatrix#1{C^{#1}}

\global\long\def\nrVertices{n}

\global\long\def\nrVerticesLineGraph#1{n_{L}^{#1}}

\global\long\def\nrEdges{m}

\global\long\def\nrColors{k}

\global\long\def\vertex{v}

\global\long\def\edge{e}

\global\long\def\trace{\mathrm{tr}}

\global\long\def\setOfColoredEdges#1{E^{#1}}
\end{minipage}

\vspace{-15mm}

\section{Introduction}

Inverse spectral geometry studies the extend to which a geometric
object, e.g., a Euclidean domain, is determined by the spectral data
of an associated operator, e.g., the eigenvalues of the Laplace operator
with suitable boundary conditions. This objective is beautifully summarized
by Kac's influential question ``Can one hear the shape of a drum?''~\cite{Kac1966}.
Recently, the author~\cite{Herbrich2014} studied broken drums each
of which is modeled as a compact flat manifold $M$ with boundary
$\partial M=\overline{\partial_{D}M\cup\partial_{N}M}$, where $\partial_{D}M$
and $\partial_{N}M$ represent the attached and unattached parts of
the drumhead, respectively. The audible frequencies of such a broken
drum are determined by the eigenvalues of the Laplace-Beltrami operator~$\laplace{\manifold}$
of $\manifold$ with Dirichlet and Neumann boundary conditions along
$\partial_{D}M$ and $\partial_{N}M$, respectively. Provided that
$\partial M$ is sufficiently smooth, this operator has discrete spectrum
given by an unbounded non-decreasing sequence of non-negative eigenvalues.

Using number-theoretic ideas, Sunada~\cite{Sunada1985} developed
a celebrated method involving group actions to construct isospectral
manifolds, i.e., manifolds whose spectra coincide. It ultimately allowed
Gordon~et~al.~\cite{GordonWebbWolpert1992} to answer Kac's question
in the negative. Buser~\cite{Buser1986} distilled the combinatorial
core of Sunada's method into the transplantation method, which involves
tiled manifolds that are composed of identical building blocks, e.g.,
$\manifold$ and $\mate{\manifold}$ in Figure~\ref{fig:Tiled_manifolds}.
If $\varphi$ is an eigenfunction on $\manifold$, then its restrictions~$(\varphi_{i})_{i=1}^{4}$
to the blocks of $\manifold$ are superposed linearly as $(\sum_{i=1}^{4}\transplantationMatrix_{ij}\varphi_{i}){}_{j=1}^{4}$
on the blocks of $\mate{\manifold}$ such that the result is an eigenfunction
on $\mate{\manifold}$, and vice versa. All known pairs of isospectral
planar domains with Dirichlet boundary conditions arise in this way~\cite{BuserConwayDoyleSemmler1994},
i.e., they are transplantable.

Following~\cite{Herbrich2014}, we encode each tiled manifold with
mixed Dirichlet-Neumann boundary conditions by an edge-colored graph
with signed loops that encode boundary conditions, e.g., $\graph$
and $\mate{\graph}$ in Figure~\ref{fig:Loop-signed-graphs}. By
definition, every vertex of an edge-colored loop-signed graph~$\graph$
has one incident edge of each color, either as a link to another vertex
or as a signed loop. If~$\graph$ has $\nrColors$ edge colors, then
it is determined by its $\nrColors$-tuple of adjacency matrices $(\adjacencyMatrix{\colorEdge}{\graph})_{\colorEdge=1}^{\nrColors}$,
which are diagonally-signed permutation matrices with $\adjacencyMatrix{\colorEdge}{\graph}=(\adjacencyMatrix{\colorEdge}{\graph})^{T}=(\adjacencyMatrix{\colorEdge}{\graph})^{-1}$
given by
\[
[\adjacencyMatrix{\colorEdge}{\graph}]_{\vertex\other{\vertex}}=\begin{cases}
1 & \mbox{if }\vertex\neq\other{\vertex}\text{ and there is a }\colorEdge\mbox{-colored link between vertices }\vertex\text{ and }\other{\vertex},\\
\pm1 & \mbox{if }\vertex=\other{\vertex}\text{ and there is a }\colorEdge\mbox{-colored }N\text{- or }D\text{-loop}\text{ at vertex }\vertex,\text{ respectively,}\\
0 & \mbox{otherwise}.
\end{cases}
\]

\begin{defn}
Let $\graph$ and $\mate{\graph}$ be edge-colored or vertex-colored
graphs given by $\nrColors$-tuples of $\nrVertices\times\nrVertices$
adjacency matrices $(\adjacencyMatrix{\colorEdge}{\graph})_{\colorEdge=1}^{\nrColors}$
and $(\adjacencyMatrix{\colorEdge}{\mate{\graph}})_{\colorEdge=1}^{\nrColors}$,
respectively. Then $\graph$ and $\mate{\graph}$ are said to be
\begin{enumerate}
\item transplantable if there exists an invertible transplantation matrix
$\transplantationMatrix\in\mathbb{R}^{\nrVertices\times\nrVertices}$
such that
\[
\adjacencyMatrix{\colorEdge}{\graph}=\transplantationMatrix\adjacencyMatrix{\colorEdge}{\mate{\graph}}\transplantationMatrix^{-1}\quad\text{for every }\colorEdge\in\{1,2,\ldots,\nrColors\},
\]

\item cycle equivalent if for every finite sequence of colors $\colorEdge_{1},\colorEdge_{2},\ldots,\colorEdge_{\length}\in\{1,2,\ldots,\nrColors\}$
\[
\trace(\adjacencyMatrix{\colorEdge_{1}}{\graph}\adjacencyMatrix{\colorEdge_{2}}{\graph}\cdots\adjacencyMatrix{\colorEdge_{\length}}{\graph})=\trace(\adjacencyMatrix{\colorEdge_{1}}{\mate{\graph}}\adjacencyMatrix{\colorEdge_{2}}{\mate{\graph}}\cdots\adjacencyMatrix{\colorEdge_{\length}}{\mate{\graph}}).
\]

\end{enumerate}
Note that transplantable graphs are cycle equivalent. The following
characterization of transplantable tiled manifolds says that the converse
holds for edge-colored loop-signed graphs.\end{defn}
\begin{thm}
\label{thm:Tranplantable_manifolds}\cite{Herbrich2014} Let $\graph$
and $\mate{\graph}$ be edge-colored loop-signed graphs with the same
numbers of vertices and colors. Let $M$ and $\mate M$ be tiled manifolds
with mixed Dirichlet-Neumann boundary conditions obtained by choosing
a building block. Then the following are equivalent:
\begin{enumerate}
\item $M$ and $\mate M$ are transplantable (and therefore isospectral),\label{enu:Transplantable_tiled_manifolds}
\item $\graph$ and $\mate{\graph}$ are transplantable,\label{enu:Transplantable_edge_colored_loop_signed_graphs}
\item $\graph$ and $\mate{\graph}$ are cycle equivalent.\label{enu:Cycle_equivalent_edge_colored_loop_signed_graphs}
\end{enumerate}
\end{thm}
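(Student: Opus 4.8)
The plan is to split Theorem~\ref{thm:Tranplantable_manifolds} into a \emph{geometric} equivalence $(\ref{enu:Transplantable_tiled_manifolds})\Leftrightarrow(\ref{enu:Transplantable_edge_colored_loop_signed_graphs})$ relating the two notions of transplantability, and a purely \emph{algebraic} equivalence $(\ref{enu:Transplantable_edge_colored_loop_signed_graphs})\Leftrightarrow(\ref{enu:Cycle_equivalent_edge_colored_loop_signed_graphs})$ about the tuples of adjacency matrices alone. The implication $(\ref{enu:Transplantable_edge_colored_loop_signed_graphs})\Rightarrow(\ref{enu:Cycle_equivalent_edge_colored_loop_signed_graphs})$ is immediate and already recorded after the Definition: if $\adjacencyMatrix{\colorEdge}{\graph}=\transplantationMatrix\adjacencyMatrix{\colorEdge}{\mate\graph}\transplantationMatrix^{-1}$ for every color, then every color word satisfies $\adjacencyMatrix{\colorEdge_{1}}{\graph}\cdots\adjacencyMatrix{\colorEdge_{\length}}{\graph}=\transplantationMatrix(\adjacencyMatrix{\colorEdge_{1}}{\mate\graph}\cdots\adjacencyMatrix{\colorEdge_{\length}}{\mate\graph})\transplantationMatrix^{-1}$, and conjugation leaves the trace unchanged. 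All the substance therefore lies in the geometric equivalence and in the converse $(\ref{enu:Cycle_equivalent_edge_colored_loop_signed_graphs})\Rightarrow(\ref{enu:Transplantable_edge_colored_loop_signed_graphs})$.

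For $(\ref{enu:Transplantable_tiled_manifolds})\Leftrightarrow(\ref{enu:Transplantable_edge_colored_loop_signed_graphs})$ I would build the dictionary between the analytic transplantation of eigenfunctions and simultaneous conjugation of adjacency matrices. Fix the building block as a fundamental domain $F$ whose faces are labeled by the $\nrColors$ colors, let $R_{\colorEdge}$ be the reflection of $F$ in its color-$\colorEdge$ face, and for an eigenfunction $\varphi$ on $\manifold$ write $\Phi=(\varphi_{\vertex})_{\vertex}$ for the restrictions pulled back to $F$. The gluing prescribed by $\graph$ is precisely the reflection principle: reflecting in a color-$\colorEdge$ face matches block functions exactly as recorded by $\adjacencyMatrix{\colorEdge}{\graph}$, where across an interior $\colorEdge$-link the two neighbors agree to first order, while an $N$- or $D$-loop forces the local function to be even ($+1$) or odd ($-1$) under $R_{\colorEdge}$. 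Thus $\varphi$ is a genuine eigenfunction respecting the boundary conditions iff $\Phi\circ R_{\colorEdge}=\adjacencyMatrix{\colorEdge}{\graph}\,\Phi$ for every $\colorEdge$. Transplanting via $\Psi=\transplantationMatrix\Phi$ and demanding the analogous relation with $\adjacencyMatrix{\colorEdge}{\mate\graph}$ on $\mate\manifold$ yields $\transplantationMatrix\adjacencyMatrix{\colorEdge}{\graph}\Phi=\adjacencyMatrix{\colorEdge}{\mate\graph}\transplantationMatrix\Phi$; since the eigenfunction restrictions span a dense set of admissible jets at the faces as the eigenvalue varies, this holds for all $\Phi$ iff $\transplantationMatrix\adjacencyMatrix{\colorEdge}{\graph}=\adjacencyMatrix{\colorEdge}{\mate\graph}\transplantationMatrix$, i.e. graph transplantability. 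Isospectrality in $(\ref{enu:Transplantable_tiled_manifolds})$ then follows since $\transplantationMatrix$ and $\transplantationMatrix^{-1}$ give mutually inverse bijections of the $\lambda$-eigenspaces.

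For the converse $(\ref{enu:Cycle_equivalent_edge_colored_loop_signed_graphs})\Rightarrow(\ref{enu:Transplantable_edge_colored_loop_signed_graphs})$ I would exploit the rigidity forced by the loop-signed edge coloring: each $\adjacencyMatrix{\colorEdge}{\graph}=(\adjacencyMatrix{\colorEdge}{\graph})^{T}=(\adjacencyMatrix{\colorEdge}{\graph})^{-1}$ is a symmetric involution, hence orthogonal, and likewise for $\mate\graph$. Reading the color index as a generator, the tuple $(\adjacencyMatrix{\colorEdge}{\graph})_{\colorEdge}$ is an orthogonal representation $\rho$ of the free product $\Gamma=(\mathbb{Z}/2)^{\ast\nrColors}$ on $\mathbb{R}^{\nrVertices}$, and cycle equivalence asserts exactly that $\rho$ and the representation $\rho'$ coming from $\mate\graph$ share the same character, $\trace\rho(g)=\trace\rho'(g)$ for all $g\in\Gamma$. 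The real algebra generated by the $\adjacencyMatrix{\colorEdge}{\graph}$ is closed under transpose, since the transpose of a color word is the reversed color word; consequently it is semisimple, because its radical $J$ would contain, for every $x\in J$, the symmetric nilpotent element $xx^{T}$, forcing $x=0$. Hence $\rho$ and $\rho'$ are completely reducible, a finite-dimensional semisimple representation is determined up to isomorphism by its character, and the intertwiner provides an invertible $\transplantationMatrix$ with $\adjacencyMatrix{\colorEdge}{\graph}=\transplantationMatrix\adjacencyMatrix{\colorEdge}{\mate\graph}\transplantationMatrix^{-1}$ for every $\colorEdge$.

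The hard part is the last step, and it is exactly where the hypothesis matters. First, one must pass from equal characters to a \emph{real} intertwiner for the infinite group $\Gamma$: over $\mathbb{C}$ equal characters give $\rho_{\mathbb{C}}\cong\rho'_{\mathbb{C}}$, and since complexification is injective on isomorphism classes of real representations, $\rho\cong\rho'$ already over $\mathbb{R}$ — but this argument runs on complete reducibility, which we get only from the orthogonality of the $\adjacencyMatrix{\colorEdge}{\graph}$. I expect this to be the main obstacle, and it should be stressed that it is special to edge-colored loop-signed graphs: for general (non-involutive) adjacency matrices the generated algebra need not be semisimple, distinct irreducible characters need not remain independent, and cycle equivalence would fail to produce any intertwiner, so the converse genuinely uses the $D/N$ loop-signing. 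A secondary, more routine obstacle is on the geometric side, namely verifying that the transmission conditions across interior faces together with the Neumann and Dirichlet conditions are captured \emph{exactly} by the signed permutation $\adjacencyMatrix{\colorEdge}{\graph}$, and that the face jets of eigenfunctions are rich enough to upgrade the per-eigenspace identity to the single matrix identity defining transplantability.
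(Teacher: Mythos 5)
The paper itself does not prove Theorem~\ref{thm:Tranplantable_manifolds} --- it is imported from \cite{Herbrich2014} with only a one-sentence indication of method: regularity and continuation theorems for elliptic operators for the equivalence of~(\ref{enu:Transplantable_tiled_manifolds}) and~(\ref{enu:Transplantable_edge_colored_loop_signed_graphs}), and representation theory for the equivalence of~(\ref{enu:Transplantable_edge_colored_loop_signed_graphs}) and~(\ref{enu:Cycle_equivalent_edge_colored_loop_signed_graphs}) --- and your proposal follows exactly that division of labor. Your algebraic half is sound as written (transpose-closure forces the generated algebra to be semisimple, and in characteristic zero a semisimple module is determined by its character, with Noether--Deuring handling the passage between $\mathbb{R}$ and $\mathbb{C}$), while your geometric half is sketched at precisely the point (density/richness of eigenfunction jets and the reflection-principle dictionary) where the cited elliptic regularity and unique-continuation theorems must do the work.
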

The equivalence of~(\ref{enu:Transplantable_tiled_manifolds}) and~(\ref{enu:Transplantable_edge_colored_loop_signed_graphs})
is shown using regularity and continuation theorems for elliptic operators,
and the equivalence of~(\ref{enu:Transplantable_edge_colored_loop_signed_graphs})
and~(\ref{enu:Cycle_equivalent_edge_colored_loop_signed_graphs})
is shown using representation theory.  In the following, we derive
further characterizations of transplantability. As is well-known,
Brownian motion on a manifold $\manifold$ has $\frac{1}{2}\laplace{\manifold}$
as its infinitesimal generator, rendering it a natural object of study
for spectral questions. Consider a particle moving on the tiled manifold
$\manifold$ in Figure~\ref{fig:Tiled_manifolds}. Each time the
particle hits $\partial_{N}M$, it is reflected back, whereas contact
with $\partial_{D}M$ destroys the particle. Since the $4$ triangular
building blocks of $\manifold$ are isometric, we only keep track
of the triangle sides visited, which corresponds to a walk on the
colored vertices of the associated directed line graph $\lineGraphVertexColored{\graph}$
shown in Figure~\ref{fig:Vertex_colored_line_graphs}. Each $N$-loop
of $\graph$ in Figure~\ref{fig:Loop-signed-graphs} contributes
$3$ directed edges of equal weight to $\lineGraphVertexColored{\graph}$
in Figure~\ref{fig:Vertex_colored_line_graphs}, whereas $D$-loops
of $\graph$ do not contribute at all. The edge-colored directed line
graph $\lineGraphEdgeColored{\graph}$ in Figure~\ref{fig:Edge_coloured_line_graphs}
is obtained by coloring edges instead of vertices. In~Section~\ref{sec:Proof},
we define $\lineGraphVertexColored{\graph}$ and $\lineGraphEdgeColored{\graph}$
rigorously, and prove the following main theorem.
\begin{thm}
\label{thm:Main_theorem}Let $\graph$ and $\mate{\graph}$ be edge-colored
loop-signed graphs with $\nrVertices$ vertices and $\nrColors$ colors.
If $\trace(\adjacencyMatrix{\colorEdge}{\graph})=\trace(\adjacencyMatrix{\colorEdge}{\mate{\graph}})$
for every $\colorEdge\in\{1,2,\ldots,\nrColors\}$, then the following
are equivalent:
\begin{enumerate}
\item $\graph$ and $\mate{\graph}$ are transplantable,\label{enu:SG_transplantable}
\item $\graph$ and $\mate{\graph}$ are cycle equivalent,\label{enu:SG_cycle_equivalent}
\item $\lineGraphVertexColored{\graph}$ and $\lineGraphVertexColored{\mate{\graph}}$
are transplantable,\label{enu:vc_LG_transplantable}
\item $\lineGraphVertexColored{\graph}$ and $\lineGraphVertexColored{\mate{\graph}}$
are cycle equivalent,\label{enu:vc_LG_cycle_equivalent}
\item $\lineGraphEdgeColored{\graph}$ and $\lineGraphEdgeColored{\mate{\graph}}$
are transplantable,\label{enu:ec_LG_transplantable}
\item $\lineGraphEdgeColored{\graph}$ and $\lineGraphEdgeColored{\mate{\graph}}$
are cycle equivalent.\label{enu:ec_LG_cycle_equivalent}
\end{enumerate}
If any of the above conditions holds, then there exists an invertible
transplantation matrix for both~(\ref{enu:vc_LG_transplantable})
and~(\ref{enu:ec_LG_transplantable}) that is the direct sum of square
matrices of sizes $(\trace(\identityMatrix{\nrVertices}+\adjacencyMatrix{\colorEdge}{\graph})/2)_{\colorEdge=1}^{\nrColors}$.
\end{thm}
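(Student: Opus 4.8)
The plan is to prove the six statements equivalent by closing a single logical loop, treating the two line graphs in parallel, and to invoke the cited Theorem~\ref{thm:Tranplantable_manifolds} only for the equivalence (\ref{enu:SG_transplantable})$\Leftrightarrow$(\ref{enu:SG_cycle_equivalent}). The conceptual point that governs everything is that for the line graphs one \emph{cannot} argue ``cycle equivalent $\Rightarrow$ transplantable'' directly: their adjacency matrices are weighted directed adjacency matrices, hence generate a representation of a free \emph{semigroup} rather than of a group, and equality of all word-traces need not force simultaneous conjugacy (already a single nilpotent Jordan block and the zero matrix have identical traces of all powers without being conjugate). I would therefore route every hard implication through $\graph$ itself, whose matrices $\adjacencyMatrix{c}{\graph}$ are symmetric involutions, hence orthogonal; the representation of the free product of $\nrColors$ copies of $\mathbb{Z}/2$ that they generate is thus completely reducible and determined up to isomorphism by its character, which is exactly what makes (\ref{enu:SG_transplantable})$\Leftrightarrow$(\ref{enu:SG_cycle_equivalent}) available.

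First I would set up the algebraic bridge supplied by the projections $P^{c}(\graph)=\frac{1}{2}(\identityMatrix{\nrVertices}+\adjacencyMatrix{c}{\graph})$ onto the $+1$-eigenspaces $V^{c}(\graph)=\ker(\adjacencyMatrix{c}{\graph}-\identityMatrix{\nrVertices})$, with inclusions $\iota_{c}\colon V^{c}(\graph)\hookrightarrow\mathbb{R}^{\nrVertices}$. Since a $c$-colored link between $\vertex$ and $\other{\vertex}$ contributes $e_{\vertex}+e_{\other{\vertex}}$ and a $c$-colored $N$-loop contributes $e_{\vertex}$ to $V^{c}(\graph)$, while a $D$-loop contributes a $-1$-eigenvector, one gets $\dim V^{c}(\graph)=\trace(P^{c}(\graph))=\trace(\identityMatrix{\nrVertices}+\adjacencyMatrix{c}{\graph})/2$, the number of non-$D$-loop $c$-colored edges, which is the number of color-$c$ vertices of both $\lineGraphVertexColored{\graph}$ and $\lineGraphEdgeColored{\graph}$. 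Reading the Brownian-walk definitions of Section~\ref{sec:Proof}, the elementary step ``move to a color-$c$ side'' is exactly $P^{c}(\graph)$, because $P^{c}(\graph)e_{\vertex}=\frac{1}{2}(e_{\vertex}+\adjacencyMatrix{c}{\graph}e_{\vertex})$ is the symmetric combination across the color-$c$ side of tile $\vertex$ and vanishes precisely when that side is a $D$-loop; modeling the line-graph space as the external direct sum $\bigoplus_{c}V^{c}(\graph)$ and using $\iota_{c}P^{c}(\graph)=P^{c}(\graph)$ with cyclicity of the trace yields the key identity $\trace(\adjacencyMatrix{c_{1}}{\lineGraphVertexColored{\graph}}\cdots\adjacencyMatrix{c_{l}}{\lineGraphVertexColored{\graph}})=\trace(P^{c_{1}}(\graph)\cdots P^{c_{l}}(\graph))$, and the identical identity for $\lineGraphEdgeColored{\graph}$. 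Finally, the standing hypothesis $\trace(\adjacencyMatrix{c}{\graph})=\trace(\adjacencyMatrix{c}{\mate{\graph}})$ together with the common vertex number $\nrVertices$ says precisely that $\dim V^{c}(\graph)=\dim V^{c}(\mate{\graph})$, so the line graphs have matching color-wise vertex counts and conditions (\ref{enu:vc_LG_transplantable})--(\ref{enu:ec_LG_cycle_equivalent}) are meaningful.

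Second, expanding the key identity turns the transfer of cycle equivalence into bookkeeping. Since $\trace(P^{c_{1}}(\graph)\cdots P^{c_{l}}(\graph))=2^{-l}\sum_{S}\trace(\prod_{i\in S}\adjacencyMatrix{c_{i}}{\graph})$, with $S$ ranging over subsequences in order, the trace of every line-graph word is a universal, graph-independent combination of traces of $\graph$-words; hence (\ref{enu:SG_cycle_equivalent})$\Rightarrow$(\ref{enu:vc_LG_cycle_equivalent}) and (\ref{enu:SG_cycle_equivalent})$\Rightarrow$(\ref{enu:ec_LG_cycle_equivalent}). Conversely, substituting $\adjacencyMatrix{c}{\graph}=2P^{c}(\graph)-\identityMatrix{\nrVertices}$ and expanding writes each $\graph$-word trace as a universal combination of line-graph word-traces (the common value $\nrVertices$ handling the empty-word term), which inverts the correspondence and gives (\ref{enu:vc_LG_cycle_equivalent})$\Rightarrow$(\ref{enu:SG_cycle_equivalent}) and (\ref{enu:ec_LG_cycle_equivalent})$\Rightarrow$(\ref{enu:SG_cycle_equivalent}). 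Together with (\ref{enu:SG_transplantable})$\Leftrightarrow$(\ref{enu:SG_cycle_equivalent}) from Theorem~\ref{thm:Tranplantable_manifolds}, it remains only to manufacture the transplantation matrices in (\ref{enu:vc_LG_transplantable}) and (\ref{enu:ec_LG_transplantable}).

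Finally I would build the block-diagonal conjugator. Given $\transplantationMatrix$ with $\adjacencyMatrix{c}{\graph}=\transplantationMatrix\,\adjacencyMatrix{c}{\mate{\graph}}\,\transplantationMatrix^{-1}$, conjugation carries $P^{c}(\mate{\graph})$ to $P^{c}(\graph)$, so $\transplantationMatrix$ restricts to an isomorphism $V^{c}(\mate{\graph})\to V^{c}(\graph)$ for each $c$; setting $\transplantationMatrixLineGraphs=\bigoplus_{c=1}^{\nrColors}\transplantationMatrix|_{V^{c}(\mate{\graph})}$ and using that each $\adjacencyMatrix{c}{\lineGraphVertexColored{\graph}}$ (and each $\adjacencyMatrix{c}{\lineGraphEdgeColored{\graph}}$) is assembled from the blocks $P^{c'}(\graph)\iota_{c''}$, the intertwining $\transplantationMatrix P^{c}(\mate{\graph})\transplantationMatrix^{-1}=P^{c}(\graph)$ yields $\adjacencyMatrix{c}{\lineGraphVertexColored{\graph}}=\transplantationMatrixLineGraphs\,\adjacencyMatrix{c}{\lineGraphVertexColored{\mate{\graph}}}\,\transplantationMatrixLineGraphs^{-1}$ and likewise for $\lineGraphEdgeColored{\graph}$. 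By construction $\transplantationMatrixLineGraphs$ is block diagonal with $c$-th block of size $\dim V^{c}(\graph)=\trace(\identityMatrix{\nrVertices}+\adjacencyMatrix{c}{\graph})/2$, proving (\ref{enu:SG_transplantable})$\Rightarrow$(\ref{enu:vc_LG_transplantable}) and (\ref{enu:SG_transplantable})$\Rightarrow$(\ref{enu:ec_LG_transplantable}) with the asserted block sizes; combined with the trivial (\ref{enu:vc_LG_transplantable})$\Rightarrow$(\ref{enu:vc_LG_cycle_equivalent}) and (\ref{enu:ec_LG_transplantable})$\Rightarrow$(\ref{enu:ec_LG_cycle_equivalent}) (transplantable graphs are cycle equivalent) the loop closes. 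I expect the main obstacle to be entirely at the level of the definitions: making rigorous that each line-graph adjacency matrix is exactly the projection block $P^{c}(\graph)\iota_{c''}$ in the side coordinates --- in particular accounting correctly for the edge weights (the three equal-weight edges contributed by each $N$-loop) and for the fact that $\bigoplus_{c}V^{c}(\graph)$ must remain an external direct sum rather than a subspace of $\mathbb{R}^{\nrVertices}$, since the $V^{c}(\graph)$ generally overlap --- and then checking that the block-diagonal $\transplantationMatrixLineGraphs$ genuinely conjugates in these coordinates and not merely on the ambient $\mathbb{R}^{\nrVertices}$.
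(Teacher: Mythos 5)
Your proposal is, in substance, the paper's own proof in different coordinates. Your projections $P^{\colorEdge}(\graph)=\tfrac{1}{2}(\identityMatrix{\nrVertices}+\adjacencyMatrix{\colorEdge}{\graph})$ are exactly the products $\tfrac{1}{2}\incidenceMatrix 1\colorMatrix{\colorEdge}\incidenceMatrix 2^{T}$ of Lemma~\ref{lem:Incidence_decomposition}; your trace-transfer is the content of Lemma~\ref{lem:Cycle_equivalences} (you route (\ref{enu:vc_LG_cycle_equivalent}) and (\ref{enu:ec_LG_cycle_equivalent}) separately through (\ref{enu:SG_cycle_equivalent}) where the paper also proves (\ref{enu:vc_LG_cycle_equivalent})$\Leftrightarrow$(\ref{enu:ec_LG_cycle_equivalent}) directly, an immaterial difference); and your block conjugator $\bigoplus_{\colorEdge}\transplantationMatrix|_{V^{\colorEdge}(\mate{\graph})}$ is precisely the paper's entrywise-defined $\transplantationMatrixLineGraphs$: expanding $\transplantationMatrix(e_{\mate{\vertex}}+e_{\mate{\other{\vertex}}})$ in the natural basis of $V^{\colorEdge}(\graph)$ recovers $[\transplantationMatrixLineGraphs]_{\edge\mate{\edge}}=[\transplantationMatrix]_{\vertex\mate{\vertex}}+[\transplantationMatrix]_{\vertex\mate{\other{\vertex}}}$. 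One place where your formulation is genuinely cleaner: invertibility of $\transplantationMatrixLineGraphs$ follows at once from $\transplantationMatrix(V^{\colorEdge}(\mate{\graph}))=V^{\colorEdge}(\graph)$ (intertwining of projections plus invertibility of $\transplantationMatrix$), whereas the paper checks linear independence of the rows of each block by a direct computation.

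There is, however, one genuine gap: your ``key identity'' $\trace(\adjacencyMatrix{\colorEdge_{1}}{\lineGraphVertexColored{\graph}}\cdots\adjacencyMatrix{\colorEdge_{\length}}{\lineGraphVertexColored{\graph}})=\trace(P^{\colorEdge_{1}}(\graph)\cdots P^{\colorEdge_{\length}}(\graph))$ is false as stated, in two ways. The harmless one: for cyclically square-free color sequences the correct identity carries a factor $2^{\length}$, i.e., the right-hand side should be $\trace\big((\adjacencyMatrix{\colorEdge_{1}}{\graph}+\identityMatrix{\nrVertices})\cdots(\adjacencyMatrix{\colorEdge_{\length}}{\graph}+\identityMatrix{\nrVertices})\big)$ as in the paper; since the factor is universal this does not hurt the transfer argument. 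The serious one: the identity fails completely for words with (cyclically) adjacent equal colors, because the line-graph matrices are nilpotent, $(\adjacencyMatrix{\colorEdge}{\lineGraphVertexColored{\graph}})^{2}=0$ (two distinct $\colorEdge$-colored edges of $\withoutDirichlet{\graph}$ are never incident), while your projections are idempotent: for $\length=2$ and $\colorEdge_{1}=\colorEdge_{2}=\colorEdge$ the left side is $0$ but the right side is $\trace(P^{\colorEdge}(\graph))=\nrVerticesLineGraph{\colorEdge}$. Consequently the claimed ``universal, graph-independent combination'' of word traces does not exist for arbitrary words, and your inversion step (\ref{enu:vc_LG_cycle_equivalent})$\Rightarrow$(\ref{enu:SG_cycle_equivalent}) also produces projection words with repeated colors that cannot be fed back into the identity directly. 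The repair is exactly the paper's reduction: since $(\adjacencyMatrix{\colorEdge}{\graph})^{2}=\identityMatrix{\nrVertices}$ and $(\adjacencyMatrix{\colorEdge}{\lineGraphVertexColored{\graph}})^{2}=0$, every line-graph word that is not cyclically square-free has trace $0$ for \emph{both} graphs and may be discarded, and every projection word reduces, by idempotency and cyclic invariance of the trace, to one with a cyclically square-free color sequence, after which the corrected identity applies in both directions. A parallel caveat holds on the edge-colored side, which your ``identical identity'' glosses over: there the non-vanishing words are chains of pairwise-overlapping color pairs, and powers satisfy $\trace\big((\adjacencyMatrix{\{\colorEdge,\other{\colorEdge}\}}{\lineGraphEdgeColored{\graph}})^{j}\big)=2\,\trace\big((\adjacencyMatrix{\colorEdge}{\lineGraphVertexColored{\graph}}\adjacencyMatrix{\other{\colorEdge}}{\lineGraphVertexColored{\graph}})^{j}\big)$, with a factor $2$ that a uniform identity would miss. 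With these reductions inserted, your argument closes correctly.
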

Theorem~\ref{thm:Main_theorem} has the following representation-theoretic
interpretation. For $K=\{1,2,\ldots,\nrColors\}$, we denote the free
group on $K$ by $F(K)$, and the free semigroup on $K$ by $K^{+}$.
The graphs $\graph$ and $\mate{\graph}$ give rise to a pair of representations
of $F(K)$ by virtue of $\colorEdge^{\pm1}\mapsto\adjacencyMatrix{\colorEdge}{\graph}$
and $\colorEdge^{\pm1}\mapsto\adjacencyMatrix{\colorEdge}{\mate{\graph}}$,
respectively. Similarly, $\lineGraphVertexColored{\graph}$ and $\lineGraphVertexColored{\mate{\graph}}$,
as well as $\lineGraphEdgeColored{\graph}$ and $\lineGraphEdgeColored{\mate{\graph}}$,
give rise to pairs of representations of $K^{+}$. If the assumptions
of Theorem~\ref{thm:Main_theorem} are satisfied, and the representations
of one pair are equivalent or have equal characters, then both are
true for all pairs.

It is worth mentioning that~\cite{Herbrich2014} gives examples of
non-isomorphic graphs $\graph$ and $\mate{\graph}$ as in Theorem~\ref{thm:Main_theorem}
that have $4$ vertices, no $N$-loops, and isomorphic line graphs
$\lineGraphVertexColored{\graph}$ and~$\lineGraphVertexColored{\mate{\graph}}$.
These pairs closely resemble the single exception of the classical
Whitney graph isomorphism theorem~\cite{Whitney1932} which states
that two uncolored connected graphs without loops or parallel edges
are isomorphic if and only if their line graphs are isomorphic, with
the exception of the triangle graph $K_{3}$ and the star graph $S_{3}=K_{1,3}$,
which both have $K_{3}$ as their line graph.

\begin{figure}
\subfloat[Tiled manifolds\label{fig:Tiled_manifolds}]{\begin{centering}
\psfrag{1}{$1$} 
\psfrag{2}{$2$} 
\psfrag{3}{$3$} 
\psfrag{4}{$4$} %
\begin{tabular}{>{\centering}p{32mm}c}
\centering{}\hspace{-2mm}\raisebox{19mm}{$\manifold$}\hspace{-2mm}\includegraphics[scale=0.45]{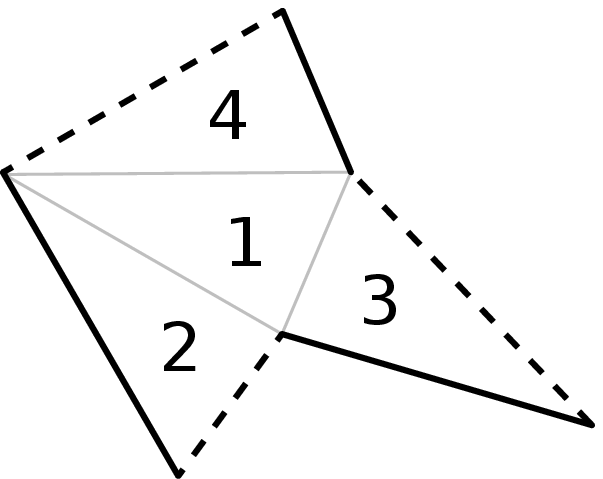} & \hspace{-2mm}\raisebox{19mm}{$\mate{\manifold}$}\hspace{-3mm}\includegraphics[scale=0.45]{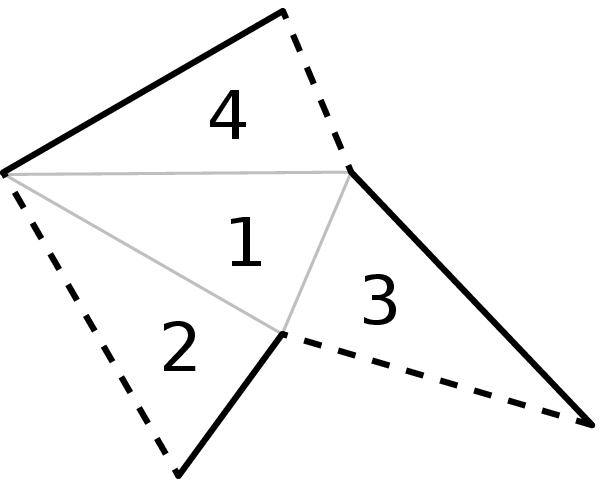}\tabularnewline
\end{tabular}
\par\end{centering}

}\hfill{}\subfloat[Triangular building block\label{fig:Building_block}]{\begin{centering}
\qquad{}\psfrag{s}{$s$} 
\psfrag{w}{$w$} 
\psfrag{z}{$z$} %
\begin{tabular}{>{\centering}p{40mm}}
\centering{}%
\begin{tabular}{c}
\includegraphics[scale=0.45]{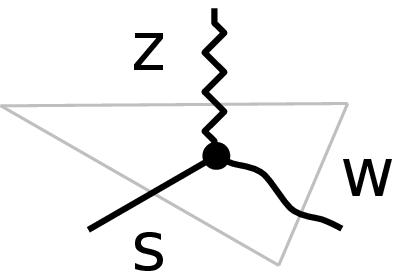}\tabularnewline
\tabularnewline
\end{tabular}\tabularnewline
\end{tabular}\qquad{}
\par\end{centering}

}\hfill{}

\subfloat[Edge-colored loop-signed graphs\label{fig:Loop-signed-graphs}]{\begin{centering}
\enskip{}\psfrag{D}{\hspace{-0.25mm}\raisebox{0.18mm}{D}}
\psfrag{N}{\hspace{-0.25mm}\raisebox{0.18mm}{N}} %
\begin{tabular}{>{\centering}p{30mm}c}
\centering{}\hspace{-2mm}\raisebox{18mm}{$\graph$}\hspace{-1mm}\includegraphics[scale=0.55]{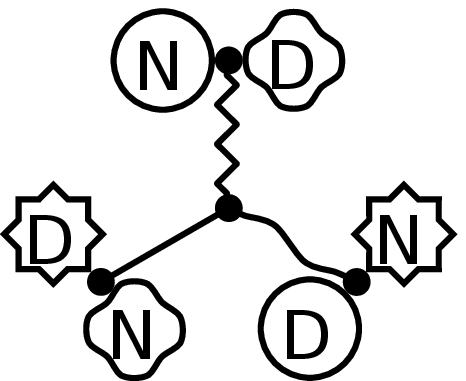} & \hspace{-2mm}\raisebox{18mm}{$\mate{\graph}$}\hspace{-2mm}\includegraphics[scale=0.55]{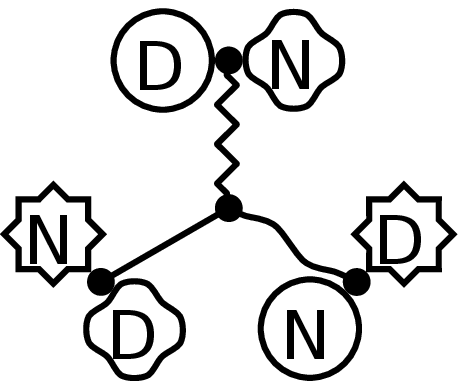}\tabularnewline
\end{tabular}\enskip{}
\par\end{centering}

}\hfill{}\subfloat[Adjacency and transplantation matrix\label{fig:SG_adjacency_and_transplantation_matrix}]{\begin{centering}
\quad{}\setlength{\arraycolsep}{1pt}%
\begin{tabular}{cc}
$s\adjacencyMatrix s{\graph}+w\adjacencyMatrix w{\graph}+z\adjacencyMatrix z{\graph}$ & $T$ ($T^{T}=3T^{-1}$)\tabularnewline
$\left(\begin{array}{cccc}
0 & s & w & z\\
s & w-z & 0 & 0\\
w & 0 & z-s & 0\\
z & 0 & 0 & s-w
\end{array}\right)$ & \setlength{\arraycolsep}{2pt}$\left(\begin{array}{cccc}
0 & 1 & 1 & 1\\
1 & 0 & 1 & -1\\
1 & -1 & 0 & 1\\
1 & 1 & -1 & 0
\end{array}\right)$\tabularnewline
\end{tabular}\setlength{\arraycolsep}{\myArraycolsep}
\par\end{centering}

}

\subfloat[Vertex-colored directed line graphs\label{fig:Vertex_colored_line_graphs}]{\begin{centering}
\begin{tabular}{>{\centering}p{32mm}c}
\centering{}\hspace{-2mm}\raisebox{24mm}{$\lineGraphVertexColored{\graph}$}\hspace{-6mm}\psfrag{a}{} 
\psfrag{A}{} 
\psfrag{b}{} 
\psfrag{B}{} 
\psfrag{c}{} 
\psfrag{C}{} 
\psfrag{d}{} 
\psfrag{D}{} 
\psfrag{1}{\raisebox{-0.5mm}{$1s$}} 
\psfrag{2}{\hspace{-2mm}$2s$} 
\psfrag{3}{\raisebox{-0.5mm}{\hspace{-1.5mm}$3w$}} 
\psfrag{4}{$4w$} 
\psfrag{5}{\hspace{-2mm}$5z$} 
\psfrag{6}{\hspace{-0.5mm}$6z$} \includegraphics[scale=0.45]{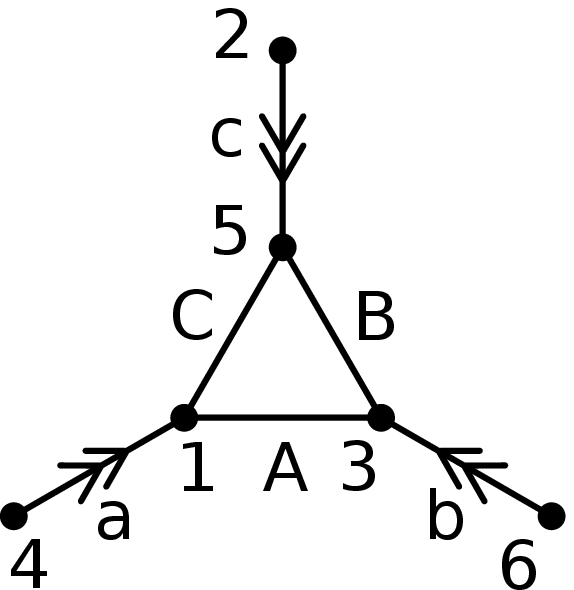} & \hspace{-5mm}\raisebox{24mm}{$\lineGraphVertexColored{\mate{\graph}}$}\hspace{-5mm}\psfrag{a}{} 
\psfrag{A}{} 
\psfrag{b}{} 
\psfrag{B}{} 
\psfrag{c}{} 
\psfrag{C}{} 
\psfrag{d}{} 
\psfrag{D}{} 
\psfrag{1}{\raisebox{-0.5mm}{$1s$}} 
\psfrag{2}{\hspace{-0.5mm}$2s$} 
\psfrag{3}{\raisebox{-0.5mm}{\hspace{-1.5mm}$3w$}} 
\psfrag{4}{\hspace{-2.5mm}$4w$} 
\psfrag{5}{\hspace{-2mm}$5z$} 
\psfrag{6}{$6z$} \includegraphics[scale=0.45]{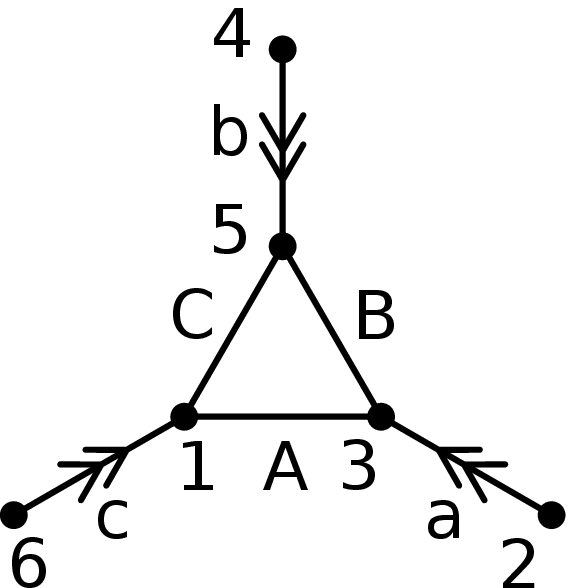}\tabularnewline
\end{tabular}\enspace{}
\par\end{centering}

}\hfill{}\subfloat[Adjacency and transplantation matrix\label{fig:LG_vc_adjacency_and_transplantation_matrix}]{\begin{centering}
\begin{tabular}[b]{cc}
\setlength{\arraycolsep}{2pt}$\left(\begin{array}{cccccc}
0 & 0 & s & s & s & 0\\
0 & 0 & 0 & 0 & 2s & 0\\
w & 0 & 0 & 0 & w & w\\
2w & 0 & 0 & 0 & 0 & 0\\
z & z & z & 0 & 0 & 0\\
0 & 0 & 2z & 0 & 0 & 0
\end{array}\right)$ & \setlength{\arraycolsep}{2pt}$\left(\begin{array}{cccccc}
1 & 1 & 0 & 0 & 0 & 0\\
2 & -1 & 0 & 0 & 0 & 0\\
0 & 0 & 1 & 1 & 0 & 0\\
0 & 0 & 2 & -1 & 0 & 0\\
0 & 0 & 0 & 0 & 1 & 1\\
0 & 0 & 0 & 0 & 2 & -1
\end{array}\right)$\setlength{\arraycolsep}{\myArraycolsep}\tabularnewline
\end{tabular}
\par\end{centering}

}

\subfloat[Edge-colored directed line graphs\label{fig:Edge_coloured_line_graphs}]{\begin{centering}
\psfrag{a}{$a$} 
\psfrag{A}{\raisebox{-0.5mm}{$a$}} 
\psfrag{b}{$b$} 
\psfrag{B}{$b$} 
\psfrag{c}{$c$} 
\psfrag{C}{$c$} 
\psfrag{d}{$d$} 
\psfrag{D}{$d$} 
\psfrag{1}{\raisebox{-0.5mm}{\hspace{0.5mm}$1$}} 
\psfrag{2}{$2$} 
\psfrag{3}{\raisebox{-0.5mm}{$3$}} 
\psfrag{4}{$4$} 
\psfrag{5}{$5$} 
\psfrag{6}{$6$} 
\psfrag{7}{$7$} 
\psfrag{8}{$8$} %
\begin{tabular}{>{\centering}p{32mm}c}
\centering{}\hspace{-2mm}\raisebox{24mm}{$\lineGraphEdgeColored{\graph}$}\hspace{-6mm}\includegraphics[scale=0.45]{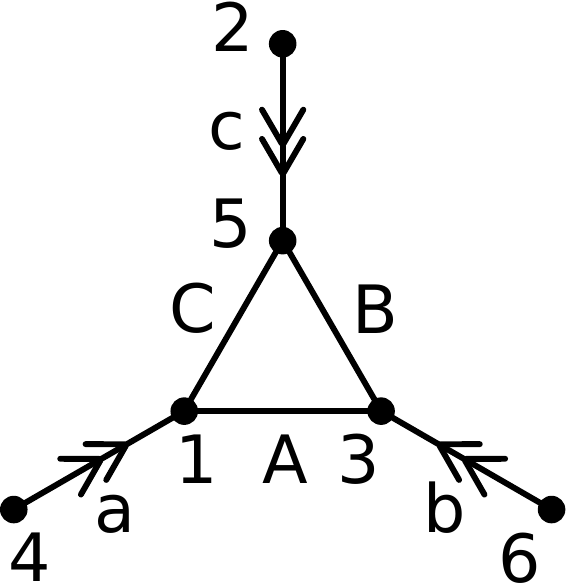} & \hspace{-5mm}\raisebox{24mm}{$\lineGraphEdgeColored{\mate{\graph}}$}\hspace{-5mm}\includegraphics[scale=0.45]{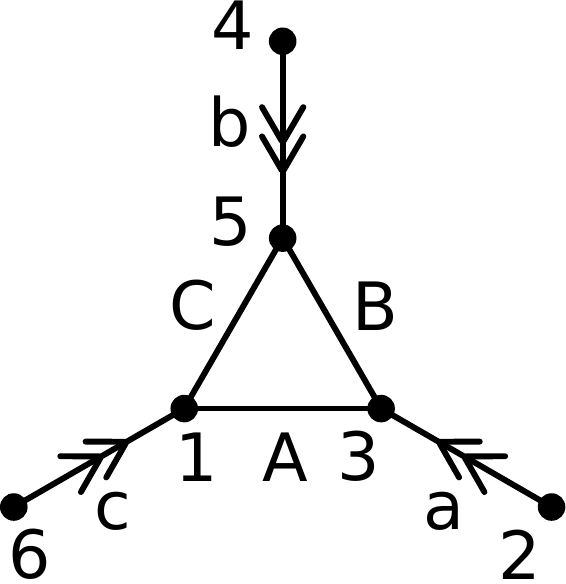}\tabularnewline
\end{tabular}
\par\end{centering}

}\hfill{}\subfloat[Adjacency and transplantation matrix\label{fig:LG_ec_adjacency_and_transplantation_matrix}]{\begin{centering}
\begin{tabular}[b]{cc}
\setlength{\arraycolsep}{2pt}$\left(\begin{array}{cccccc}
0 & 0 & a & a & c & 0\\
0 & 0 & 0 & 0 & 2c & 0\\
a & 0 & 0 & 0 & b & b\\
2a & 0 & 0 & 0 & 0 & 0\\
c & c & b & 0 & 0 & 0\\
0 & 0 & 2b & 0 & 0 & 0
\end{array}\right)$ & \setlength{\arraycolsep}{2pt}$\left(\begin{array}{cccccc}
1 & 1 & 0 & 0 & 0 & 0\\
2 & -1 & 0 & 0 & 0 & 0\\
0 & 0 & 1 & 1 & 0 & 0\\
0 & 0 & 2 & -1 & 0 & 0\\
0 & 0 & 0 & 0 & 1 & 1\\
0 & 0 & 0 & 0 & 2 & -1
\end{array}\right)$\setlength{\arraycolsep}{\myArraycolsep}\tabularnewline
\end{tabular}
\par\end{centering}

}

\caption{Graph representations of a pair of transplantable tiled manifolds.
The adjacency matrices belong to the respective first graph. The types
of line $(straight,wavy,zigzag)$ represent the edge colors $(s,w,z)$
of $\graph$ and $\mate{\graph}$. The graphs $\lineGraphEdgeColored{\graph}$
and $\lineGraphEdgeColored{\mate{\graph}}$ have edge colors $(a,b,c)=(\{s,w\},\{w,z\},\{s,z\})$.}
\end{figure}

We want to point out the results in~\cite{McDonaldMeyers2003,OrenBand2012},
which initiated our investigations. In~\cite{McDonaldMeyers2003},
McDonald and Meyers consider the finitely many known pairs of transplantable
planar domains with pure Dirichlet boundary conditions~\cite{BuserConwayDoyleSemmler1994},
and introduce their line graph construction, which, in our notation,
corresponds to the assignment $\manifold\mapsto\lineGraphEdgeColored{\graph}$.
For~each of the pairs in~\cite{BuserConwayDoyleSemmler1994}, they
verify that the associated edge-colored line graphs are cospectral
with respect to a certain discrete Laplace operator. In~\cite{OrenBand2012},
Oren and Band note that these graphs are also cospectral with respect
to their weighted adjacency matrices. However, the line graph construction
was neither known to always produce cospectral graphs, nor could it
deal with Neumann boundary conditions, and it had not been noticed
that there exist canonical transplantations as in the second part
of Theorem~\ref{thm:Main_theorem}.

\section{\label{sec:Proof}Colored directed line graphs}

\noindent Let $\graph$ be an edge-colored loop-signed graph with
$\nrVertices$ vertices and adjacency matrices $(\adjacencyMatrix{\colorEdge}{\graph})_{\colorEdge=1}^{\nrColors}$.
In particular, $\trace(\identityMatrix{\nrVertices}+\adjacencyMatrix{\colorEdge}{\graph})/2$
equals the number of $\colorEdge$-colored links and $N$-loops of
$\graph$.
\begin{defn}
Let $\withoutDirichlet{\graph}$ be the graph obtained from $\graph$
by removing all $D$-loops. Let
\[
\nrVerticesLineGraph{}=\sum_{\colorEdge=1}^{\nrColors}\frac{\trace(\identityMatrix{\nrVertices}+\adjacencyMatrix{\colorEdge}{\graph})}{2}
\]
denote the number of edges of $\withoutDirichlet{\graph}$. The vertex-colored
directed line graph $\lineGraphVertexColored{\graph}$ of $\graph$
has one $c$-colored vertex for each $\colorEdge$-colored edge of
$\withoutDirichlet{\graph}$, and two vertices of $\lineGraphVertexColored{\graph}$
are connected if and only if the corresponding edges in $\withoutDirichlet{\graph}$
are incident. More precisely, $\lineGraphVertexColored{\graph}$ is
defined by its $\nrColors$-tuple of $\nrVerticesLineGraph{}\times\nrVerticesLineGraph{}$
adjacency matrices $(\adjacencyMatrix{\colorEdge}{\lineGraphVertexColored{\graph}})_{\colorEdge=1}^{\nrColors}$
given by
\[
[\adjacencyMatrix{\colorEdge}{\lineGraphVertexColored{\graph}}]_{\edge\other{\edge}}=\begin{cases}
2 & \mbox{if edge }\edge\text{ of }\withoutDirichlet{\graph}\text{ is }\colorEdge\text{-colored and shares all of its vertices with edge }\other{\edge}\neq\edge,\\
1 & \mbox{if edge }\edge\text{ of }\withoutDirichlet{\graph}\text{ is a }\colorEdge\text{-colored link and shares one vertex with edge }\other{\edge}\neq\edge,\\
0 & \mbox{otherwise}.
\end{cases}
\]
The edge-colored directed line graph $\lineGraphEdgeColored{\graph}$
has $\nrVerticesLineGraph{}$ vertices, colors $\{\{\colorEdge,\other{\colorEdge}\}\mid1\leq\colorEdge<\other{\colorEdge}\leq\nrColors\}$,
and is obtained from $\lineGraphVertexColored{\graph}$ by coloring
its edges with the colors of their incident vertices. More precisely,
$\lineGraphEdgeColored{\graph}$ is defined by its $\binom{\nrColors}{2}$
adjacency matrices $(\adjacencyMatrix{\{\colorEdge,\other{\colorEdge}\}}{\lineGraphEdgeColored{\graph}})_{1\leq\colorEdge<\other{\colorEdge}\leq\nrColors}$
given by
\[
[\adjacencyMatrix{\{\colorEdge,\other{\colorEdge}\}}{\lineGraphEdgeColored{\graph}}]_{\edge\other{\edge}}=\begin{cases}
[\adjacencyMatrix{\colorEdge}{\lineGraphVertexColored{\graph}}]_{\edge\other{\edge}}+[\adjacencyMatrix{\other{\colorEdge}}{\lineGraphVertexColored{\graph}}]_{\edge\other{\edge}} & \text{if the set of colors of edges }\edge\text{ and }\other{\edge}\text{ of }\withoutDirichlet{\graph}\text{ is }\{\colorEdge,\other{\colorEdge}\},\\
0 & \mbox{otherwise}.
\end{cases}
\]

\end{defn}
We note that if $\graph$ has no $N$-loops or parallel links, then
$\lineGraphEdgeColored{\graph}$ is a simple edge-colored undirected
graph, meaning it has symmetric $\{0,1\}$-adjacency matrices with
zero diagonal.
\begin{defn}
\noindent Let $\incidenceMatrix{\weight}\in\{0,1,\weight\}^{n\times\nrVerticesLineGraph{}}$
be the weighted incidence matrix of $\withoutDirichlet{\graph}$ given
by 
\[
[\incidenceMatrix{\weight}]_{\vertex\edge}=\begin{cases}
\weight & \text{if edge }\edge\text{ of }\withoutDirichlet{\graph}\text{ is an }N\text{-loop incident to vertex }\vertex,\\
1 & \text{if edge }\edge\text{ of }\withoutDirichlet{\graph}\text{ is a link incident to vertex }\vertex,\\
0 & \text{otherwise.}
\end{cases}
\]
For each $\colorEdge\in\{1,2,\ldots,\nrColors\}$, let $\colorMatrix{\colorEdge}\in\{0,1\}^{\nrVerticesLineGraph{}\times\nrVerticesLineGraph{}}$
be the diagonal matrix given by
\[
[\colorMatrix{\colorEdge}]_{\edge\edge}=\begin{cases}
1 & \text{if edge }\edge\text{ of }\withoutDirichlet{\graph}\text{ has color }\colorEdge,\\
0 & \text{otherwise.}
\end{cases}
\]
\end{defn}
\begin{lem}
\label{lem:Incidence_decomposition}For every $\colorEdge\in\{1,2,\ldots,\nrColors\}$,
\[
\adjacencyMatrix{\colorEdge}{\graph}=\incidenceMatrix 1\colorMatrix{\colorEdge}\incidenceMatrix 2^{T}-\identityMatrix{\nrVertices}\qquad\text{and}\qquad\adjacencyMatrix{\colorEdge}{\lineGraphVertexColored{\graph}}=\colorMatrix{\colorEdge}(\incidenceMatrix 2^{T}\incidenceMatrix 1-2\identityMatrix{\nrVerticesLineGraph{}}).
\]
\end{lem}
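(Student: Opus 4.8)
The plan is to verify both matrix identities entrywise by carefully tracking how the incidence structure of $\withoutDirichlet{\graph}$ encodes the adjacency relations. For the first identity $\adjacencyMatrix{\colorEdge}{\graph}=\incidenceMatrix 1\colorMatrix{\colorEdge}\incidenceMatrix 2^{T}-\identityMatrix{\nrVertices}$, I would compute the $(\vertex,\other{\vertex})$ entry of the product $\incidenceMatrix 1\colorMatrix{\colorEdge}\incidenceMatrix 2^{T}$ as $\sum_{\edge}[\incidenceMatrix 1]_{\vertex\edge}[\colorMatrix{\colorEdge}]_{\edge\edge}[\incidenceMatrix 2]_{\other{\vertex}\edge}$. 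The diagonal matrix $\colorMatrix{\colorEdge}$ restricts the sum to $\colorEdge$-colored edges $\edge$ of $\withoutDirichlet{\graph}$, so the entry counts, with weights, those $\colorEdge$-colored edges incident to both $\vertex$ and $\other{\vertex}$. For $\vertex\neq\other{\vertex}$ only a link can be incident to two distinct vertices, contributing $1\cdot1\cdot1=1$, matching $[\adjacencyMatrix{\colorEdge}{\graph}]_{\vertex\other{\vertex}}$; the $-\identityMatrix{\nrVertices}$ term does not affect off-diagonal entries. For $\vertex=\other{\vertex}$, the contributions come from loops (giving $\weight\cdot\weight$ with $\weight=1$ and $\weight=2$, i.e. $1\cdot2$ from $\incidenceMatrix 1$ and $\incidenceMatrix 2$) and from the two endpoints of each incident link (each contributing $1\cdot1$); subtracting the identity must then reproduce the signed loop value, which is where the bookkeeping demands care.

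The crux of the diagonal case is that $\trace(\identityMatrix{\nrVertices}+\adjacencyMatrix{\colorEdge}{\graph})/2$ equals the number of $\colorEdge$-colored links and $N$-loops, and that every vertex has exactly one incident edge of each color in $\withoutDirichlet{\graph}$ unless that color appeared as a $D$-loop (now removed). So for fixed $\vertex$ and color $\colorEdge$, exactly one of three things happens: there is a $\colorEdge$-colored link at $\vertex$ (diagonal product contributes $1$, and $1-1=0$, matching that an $N$-loop is absent so the diagonal of $\adjacencyMatrix{\colorEdge}{\graph}$ is $0$); there is a $\colorEdge$-colored $N$-loop at $\vertex$ (product contributes $\weight_1\weight_2=1\cdot1=2$ using the two incidence weights, and $2-1=1=+1$, matching the $N$-loop sign); or the $\colorEdge$-edge at $\vertex$ was a $D$-loop, hence removed, so the product contributes $0$ and $0-1=-1$, matching the $D$-loop sign. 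I would present this as a clean three-case table keyed to the color-$\colorEdge$ edge type at vertex $\vertex$.

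For the second identity $\adjacencyMatrix{\colorEdge}{\lineGraphVertexColored{\graph}}=\colorMatrix{\colorEdge}(\incidenceMatrix 2^{T}\incidenceMatrix 1-2\identityMatrix{\nrVerticesLineGraph{}})$, I would again go entrywise, computing $[\incidenceMatrix 2^{T}\incidenceMatrix 1]_{\edge\other{\edge}}=\sum_{\vertex}[\incidenceMatrix 2]_{\vertex\edge}[\incidenceMatrix 1]_{\vertex\other{\edge}}$, which counts the shared vertices of edges $\edge$ and $\other{\edge}$ of $\withoutDirichlet{\graph}$, weighted by the incidence entries. Left-multiplication by $\colorMatrix{\colorEdge}$ zeroes out all rows $\edge$ whose edge is not $\colorEdge$-colored, matching the asymmetry in the definition of $[\adjacencyMatrix{\colorEdge}{\lineGraphVertexColored{\graph}}]_{\edge\other{\edge}}$, which keys the coloring to the source edge $\edge$. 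For a $\colorEdge$-colored link $\edge$ sharing exactly one vertex with $\other{\edge}\neq\edge$, the weighted count is $1$; for a pair of edges (necessarily loop-link or the like) sharing all vertices, the count is $2$ after using the loop weight $2$ in $\incidenceMatrix 2$ or the two coincident endpoints of parallel links; and the $-2\identityMatrix{\nrVerticesLineGraph{}}$ term kills the diagonal $\edge=\other{\edge}$ self-overlap so that $[\adjacencyMatrix{\colorEdge}{\lineGraphVertexColored{\graph}}]_{\edge\edge}=0$.

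The main obstacle I anticipate is the consistent treatment of the weights $\weight=1$ versus $\weight=2$ across $\incidenceMatrix 1$ and $\incidenceMatrix 2$, and showing they correctly distinguish the ``shares one vertex'' ($1$) from ``shares all vertices'' ($2$) cases of $\adjacencyMatrix{\colorEdge}{\lineGraphVertexColored{\graph}}$ while simultaneously yielding the loop/link/$D$-loop trichotomy on the diagonal of the first identity. In particular, for an $N$-loop, $\incidenceMatrix 1$ records weight $1$ but $\incidenceMatrix 2$ records weight $2$, and I must check that the asymmetric placement of these weights in the two identities is exactly what makes both the factor-of-$2$ multiplicities in the line graph and the $+1$ diagonal sign in $\adjacencyMatrix{\colorEdge}{\graph}$ come out right. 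I would verify this against the worked example in Figure~\ref{fig:SG_adjacency_and_transplantation_matrix} and Figure~\ref{fig:LG_vc_adjacency_and_transplantation_matrix} as a sanity check before writing the general argument, since the interplay of the loop weights is the only genuinely subtle point; the rest is routine index-chasing.
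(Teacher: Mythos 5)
Your proposal is correct and takes essentially the same approach as the paper: a direct entrywise/row-wise verification of both identities via a case analysis on whether the unique $c$-colored edge at each vertex is a link, an $N$-loop, or a removed $D$-loop, with the asymmetric loop weights in $B_{1}$ and $B_{2}$ doing exactly the work you identify. Just fix two slips in the write-up: ``$1\cdot1=2$'' should read $1\cdot2=2$, and an incident link contributes to the diagonal entry of $B_{1}C^{c}B_{2}^{T}$ once, not once per endpoint (the sum runs over edges)---your second paragraph's three-case table already has this right.
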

\begin{proof}
We show the matrix equalities row by row. If $M$ is a matrix, let
$[M]_{m}$ denote its row~$m$. Let $\vertex\in\{1,2,\ldots,\nrVertices\}$.
Recall that vertex $\vertex$ of $\graph$ has exactly one $\colorEdge$-colored
incident edge $\edge$, either as a link, $N$-loop, or $D$-loop,
respectively. If $\edge$ is a link to vertex $\other{\vertex}\neq\vertex$,
then $[\adjacencyMatrix{\colorEdge}{\graph}]_{\vertex\other{\vertex}}=1$
and $[\incidenceMatrix 1\colorMatrix{\colorEdge}]_{\vertex\edge}=1$
are the only non-vanishing entries in $[\adjacencyMatrix{\colorEdge}{\graph}]_{\vertex}$
and $[\incidenceMatrix 1\colorMatrix{\colorEdge}]_{\vertex}$, respectively.
In particular, $[\incidenceMatrix 1\colorMatrix{\colorEdge}\incidenceMatrix 2^{T}]_{\vertex}$
equals $[\incidenceMatrix 2^{T}]_{\edge}$ whose non-zero entries
are $[\incidenceMatrix 2^{T}]_{\edge\vertex}=[\incidenceMatrix 2^{T}]_{\edge\other{\vertex}}=1$.
Similarly, if $\edge$ is a $D$-link, then $[\adjacencyMatrix{\colorEdge}{\graph}]_{\vertex\vertex}=-1$
and $[\incidenceMatrix 1\colorMatrix{\colorEdge}\incidenceMatrix 2^{T}]_{\vertex}=0$.
Finally, if $\edge$ is an $N$-link, then $[\adjacencyMatrix{\colorEdge}{\graph}]_{\vertex\vertex}=1$
and $[\incidenceMatrix 1\colorMatrix{\colorEdge}]_{\vertex\edge}=1$
are the only non-vanishing entries in their respective rows. In particular,
the same is true for $[\incidenceMatrix 1\colorMatrix{\colorEdge}\incidenceMatrix 2^{T}]_{\vertex\vertex}=[\incidenceMatrix 2^{T}]_{\edge\vertex}=2$,
which shows the first equality.

Let $\edge\in\{1,2,\ldots,\nrVerticesLineGraph{}\}$. If edge $\edge$
of $\withoutDirichlet{\graph}$ is not $\colorEdge$-colored, then
$[\adjacencyMatrix{\colorEdge}{\lineGraphVertexColored{\graph}}]_{\edge}=0$
and $[\colorMatrix{\colorEdge}]_{\edge}=0$. We therefore assume that
$\edge$ is $\colorEdge$-colored, in which case $[\colorMatrix{\colorEdge}(\incidenceMatrix 2^{T}\incidenceMatrix 1-2\identityMatrix{\nrVerticesLineGraph{}})]_{\edge}=[\incidenceMatrix 2^{T}\incidenceMatrix 1-2\identityMatrix{\nrVerticesLineGraph{}}]_{\edge}$.
If $\edge$ is a link between vertices $\vertex$ and $\other{\vertex}$
of $\graph$, then $[\incidenceMatrix 2^{T}]_{\edge\vertex}=[\incidenceMatrix 2^{T}]_{\edge\other{\vertex}}=1$
are the only non-zero entries in $[\incidenceMatrix 2^{T}]_{\edge}$.
In particular, $[\incidenceMatrix 2^{T}\incidenceMatrix 1]_{\edge}=[\incidenceMatrix 1]_{\vertex}+[\incidenceMatrix 1]_{\other{\vertex}}$,
which coincides with $[\adjacencyMatrix{\colorEdge}{\lineGraphVertexColored{\graph}}+2\identityMatrix{\nrVerticesLineGraph{}}]_{\edge}$.
Similarly, if $\edge$ is an $N$-loop at $\vertex$, then $[\incidenceMatrix 2^{T}]_{\edge\vertex}=2$
is the only non-zero entry in $[\incidenceMatrix 2^{T}]_{\edge}$
which gives $[\incidenceMatrix 2^{T}\incidenceMatrix 1]_{\edge}=2[\incidenceMatrix 1]_{\vertex}=[\adjacencyMatrix{\colorEdge}{\lineGraphVertexColored{\graph}}+2\identityMatrix{\nrVerticesLineGraph{}}]_{\edge}$.\end{proof}
\begin{lem}
\label{lem:Cycle_equivalences} In Theorem~\ref{thm:Main_theorem},
the statements~(\ref{enu:SG_cycle_equivalent}), (\ref{enu:vc_LG_cycle_equivalent}),
and (\ref{enu:ec_LG_cycle_equivalent}) are equivalent.\end{lem}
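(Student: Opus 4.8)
The plan is to regard each cycle-equivalence condition as the assertion that $\graph$ and $\mate\graph$ produce the same family of \emph{word traces} $\trace(X_{\colorEdge_1}X_{\colorEdge_2}\cdots X_{\colorEdge_\length})$, indexed by finite color sequences, and to link the three families by graph-independent, invertible transforms. Everything rests on Lemma~\ref{lem:Incidence_decomposition}, which reads $\adjacencyMatrix\colorEdge\graph + \identityMatrix\nrVertices = \incidenceMatrix 1\colorMatrix\colorEdge\incidenceMatrix 2^{T}$ and $\adjacencyMatrix\colorEdge{\lineGraphVertexColored\graph} + 2\colorMatrix\colorEdge = \colorMatrix\colorEdge\incidenceMatrix 2^{T}\incidenceMatrix 1$. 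First I would extract what the hypothesis gives: since $\trace(\colorMatrix\colorEdge)$ equals the number of $\colorEdge$-colored edges of $\withoutDirichlet\graph$, i.e.\ $\trace(\identityMatrix\nrVertices + \adjacencyMatrix\colorEdge\graph)/2$, the assumption $\trace(\adjacencyMatrix\colorEdge\graph) = \trace(\adjacencyMatrix\colorEdge{\mate\graph})$ makes the counts $\trace(\colorMatrix\colorEdge)$ agree for $\graph$ and $\mate\graph$, and hence $\nrVerticesLineGraph{} = \nrVerticesLineGraph{\mate\graph}$. These equal counts are the base cases of the inductions below.

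The centerpiece of (\ref{enu:SG_cycle_equivalent})$\Leftrightarrow$(\ref{enu:vc_LG_cycle_equivalent}) is a bridge identity obtained from cyclicity of the trace: shifting a single factor $\incidenceMatrix 1$ around the cycle yields
\[
\trace\Bigl(\textstyle\prod_{i=1}^{\length} \incidenceMatrix 1\colorMatrix{\colorEdge_i}\incidenceMatrix 2^{T}\Bigr) = \trace\Bigl(\textstyle\prod_{i=1}^{\length} \colorMatrix{\colorEdge_i}\incidenceMatrix 2^{T}\incidenceMatrix 1\Bigr),
\]
so the word traces of $(\adjacencyMatrix\colorEdge\graph + \identityMatrix\nrVertices)_{\colorEdge}$ coincide with those of $(\adjacencyMatrix\colorEdge{\lineGraphVertexColored\graph} + 2\colorMatrix\colorEdge)_{\colorEdge}$. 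It then remains to pass between these shifted word traces and the genuine ones on each side. On the $\graph$ side the shift is the scalar $\identityMatrix\nrVertices$, so expanding $\prod_i(\adjacencyMatrix{\colorEdge_i}\graph + \identityMatrix\nrVertices)$ over subsets writes each shifted trace as $\trace(\adjacencyMatrix{\colorEdge_1}\graph\cdots\adjacencyMatrix{\colorEdge_\length}\graph)$ plus strictly shorter genuine word traces, a system triangular in word length with unit diagonal and constant term $\trace(\identityMatrix\nrVertices) = \nrVertices$. On the $\lineGraphVertexColored\graph$ side the shift is the projection $2\colorMatrix\colorEdge$, and the same expansion needs a \emph{collapsing} step: using $\colorMatrix\colorEdge\colorMatrix{\other\colorEdge} = \delta_{\colorEdge\other\colorEdge}\colorMatrix\colorEdge$ and $\colorMatrix\colorEdge\adjacencyMatrix{\other\colorEdge}{\lineGraphVertexColored\graph} = \delta_{\colorEdge\other\colorEdge}\adjacencyMatrix{\other\colorEdge}{\lineGraphVertexColored\graph}$, every subset term either vanishes or collapses to a genuine $\lineGraphVertexColored\graph$-word trace of the retained subword, the fully degenerate term contributing a multiple of $\trace(\colorMatrix\colorEdge)$. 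Both systems are thus invertible, and since $\nrVertices$ and $\trace(\colorMatrix\colorEdge)$ agree for $\graph$ and $\mate\graph$, equality of all word traces transfers through the bridge, giving (\ref{enu:SG_cycle_equivalent})$\Leftrightarrow$(\ref{enu:vc_LG_cycle_equivalent}).

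For (\ref{enu:vc_LG_cycle_equivalent})$\Leftrightarrow$(\ref{enu:ec_LG_cycle_equivalent}) I would use the factorization $\adjacencyMatrix{\{\colorEdge,\other\colorEdge\}}{\lineGraphEdgeColored\graph} = \adjacencyMatrix\colorEdge{\lineGraphVertexColored\graph}\colorMatrix{\other\colorEdge} + \adjacencyMatrix{\other\colorEdge}{\lineGraphVertexColored\graph}\colorMatrix\colorEdge$, immediate from the definitions. Expanding a word trace of the edge-colored matrices and repeatedly applying $\colorMatrix{\other\colorEdge}\adjacencyMatrix\colorEdge{\lineGraphVertexColored\graph} = \delta_{\colorEdge\other\colorEdge}\adjacencyMatrix\colorEdge{\lineGraphVertexColored\graph}$ shows each surviving term is a vertex-colored word trace indexed by a \emph{threading}: a closed color sequence $\alpha_1,\ldots,\alpha_\length$ with $\{\alpha_t,\alpha_{t+1}\}$ the $t$-th prescribed pair. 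Hence every $\lineGraphEdgeColored\graph$-word trace is a graph-independent sum of $\lineGraphVertexColored\graph$-word traces, which settles (\ref{enu:vc_LG_cycle_equivalent})$\Rightarrow$(\ref{enu:ec_LG_cycle_equivalent}). To invert, note the structural identity $\adjacencyMatrix\colorEdge{\lineGraphVertexColored\graph}\adjacencyMatrix\colorEdge{\lineGraphVertexColored\graph} = 0$, since two distinct $\colorEdge$-colored edges of $\withoutDirichlet\graph$ never meet; thus a $\lineGraphVertexColored\graph$-word trace vanishes unless consecutive colors differ. A threading is determined by its first color, so each prescribed edge sequence admits at most two threadings, the only two-threading case being a word alternating between two colors, whose threadings agree as cyclic words. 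Therefore, on cyclic classes the threading-sum is diagonal with entries $1$ or $2$, hence invertible, and equality of all $\lineGraphEdgeColored\graph$-word traces forces equality of all $\lineGraphVertexColored\graph$-word traces.

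The step I expect to be the main obstacle is verifying invertibility of these transforms cleanly, in particular the collapsing expansion for the projection shift $2\colorMatrix\colorEdge$ in (\ref{enu:SG_cycle_equivalent})$\Leftrightarrow$(\ref{enu:vc_LG_cycle_equivalent}) and the threading-inversion in (\ref{enu:ec_LG_cycle_equivalent})$\Rightarrow$(\ref{enu:vc_LG_cycle_equivalent}). Both hinge on the same two facts about $\lineGraphVertexColored\graph$, namely that each row of $\adjacencyMatrix\colorEdge{\lineGraphVertexColored\graph}$ is supported on $\colorEdge$-colored vertices and no $\colorEdge$-colored vertex is adjacent to another, and both invoke the hypothesis only through the base data $\nrVertices$ and $\trace(\colorMatrix\colorEdge)$. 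Once the length-graded triangular structure is in place, the equivalences (\ref{enu:SG_cycle_equivalent}), (\ref{enu:vc_LG_cycle_equivalent}), and (\ref{enu:ec_LG_cycle_equivalent}) follow by induction on word length.
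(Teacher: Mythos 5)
Your proposal is correct and takes essentially the same approach as the paper: both arguments rest on Lemma~\ref{lem:Incidence_decomposition}, cyclic invariance of the trace, and the orthogonality of the color projections $\colorMatrix{\colorEdge}$, with the hypothesis entering only through $\nrVertices$ and $\trace(\colorMatrix{\colorEdge})$. The differences are organizational rather than substantive: where you run triangular ``shifted versus genuine'' systems over all words, the paper first reduces to cyclically square-free color sequences and proves $\trace(\adjacencyMatrix{\colorEdge_{1}}{\lineGraphVertexColored{\graph}}\cdots\adjacencyMatrix{\colorEdge_{\length}}{\lineGraphVertexColored{\graph}})=\trace((\adjacencyMatrix{\colorEdge_{1}}{\graph}+\identityMatrix{\nrVertices})\cdots(\adjacencyMatrix{\colorEdge_{\length}}{\graph}+\identityMatrix{\nrVertices}))$ directly, and your threading count (one threading generically, two in the alternating two-color case) reproduces exactly the paper's two displayed trace identities, including the factor of $2$.
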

\begin{proof}
We start by showing that~(\ref{enu:SG_cycle_equivalent}) and~(\ref{enu:vc_LG_cycle_equivalent})
are equivalent, which amounts to showing that the traces of products
of adjacency matrices of $\graph$ determine those of $\lineGraphVertexColored{\graph}$,
and vice versa. Since $\trace(\identityMatrix{\nrVertices})=\nrVertices$
and $(\trace(\adjacencyMatrix{\colorEdge}{\graph}))_{\colorEdge=1}^{\nrColors}$
are given by assumption, $\trace(\identityMatrix{\nrVerticesLineGraph{}})=\sum_{\colorEdge=1}^{\nrColors}(\trace(\identityMatrix{\nrVertices}+\adjacencyMatrix{\colorEdge}{\graph}))/2$
can be assumed as given as well. For $\colorEdge\in\{1,2,\ldots,\nrColors\}$,
we have $\trace(\adjacencyMatrix{\colorEdge}{\lineGraphVertexColored{\graph}})=0$,
$(\adjacencyMatrix{\colorEdge}{\graph})^{2}=\identityMatrix{\nrVertices}$,
and $(\adjacencyMatrix{\colorEdge}{\lineGraphVertexColored{\graph}})^{2}=0$.
It therefore suffices to consider products of adjacency matrices with
cyclically square-free color sequences, i.e., sequences of the form
$\colorEdge_{1},\colorEdge_{2}\ldots,\colorEdge_{\length}\in\{1,2,\ldots,\nrColors\}$
with $\colorEdge_{1}\neq\colorEdge_{\length}$ and $\colorEdge_{i}\neq\colorEdge_{i+1}$
for $i\in\{1,2,\ldots,\length-1\}$. As $\colorMatrix{\colorEdge_{1}}\colorMatrix{\colorEdge_{2}}=\colorMatrix{\colorEdge_{2}}\colorMatrix{\colorEdge_{1}}=0$,
Lemma~\ref{lem:Incidence_decomposition} yields
\[
\adjacencyMatrix{\colorEdge_{1}}{\lineGraphVertexColored{\graph}}\adjacencyMatrix{\colorEdge_{2}}{\lineGraphVertexColored{\graph}}=(\colorMatrix{\colorEdge_{1}}\incidenceMatrix 2^{T}\incidenceMatrix 1-2\colorMatrix{\colorEdge_{1}})(\colorMatrix{\colorEdge_{2}}\incidenceMatrix 2^{T}\incidenceMatrix 1-2\colorMatrix{\colorEdge_{2}})=\colorMatrix{\colorEdge_{1}}\incidenceMatrix 2^{T}\incidenceMatrix 1\colorMatrix{\colorEdge_{2}}\incidenceMatrix 2^{T}\incidenceMatrix 1-2\colorMatrix{\colorEdge_{1}}\incidenceMatrix 2^{T}\incidenceMatrix 1\colorMatrix{\colorEdge_{2}},
\]
which has trace
\[
\trace(\adjacencyMatrix{\colorEdge_{1}}{\lineGraphVertexColored{\graph}}\adjacencyMatrix{\colorEdge_{2}}{\lineGraphVertexColored{\graph}})=\trace(\incidenceMatrix 1\colorMatrix{\colorEdge_{1}}\incidenceMatrix 2^{T}\incidenceMatrix 1\colorMatrix{\colorEdge_{2}}\incidenceMatrix 2^{T})-2\,\trace(\colorMatrix{\colorEdge_{2}}\colorMatrix{\colorEdge_{1}}\incidenceMatrix 2^{T}\incidenceMatrix 1)=\trace((\adjacencyMatrix{\colorEdge_{1}}{\graph}+\identityMatrix{\nrVertices})(\adjacencyMatrix{\colorEdge_{2}}{\graph}+\identityMatrix{\nrVertices})).
\]
Similarly,
\[
\trace(\adjacencyMatrix{\colorEdge_{1}}{\lineGraphVertexColored{\graph}}\adjacencyMatrix{\colorEdge_{2}}{\lineGraphVertexColored{\graph}}\cdots\adjacencyMatrix{\colorEdge_{\length}}{\lineGraphVertexColored{\graph}})=\trace((\adjacencyMatrix{\colorEdge_{1}}{\graph}+\identityMatrix{\nrVertices})(\adjacencyMatrix{\colorEdge_{2}}{\graph}+\identityMatrix{\nrVertices})\cdots(\adjacencyMatrix{\colorEdge_{\length}}{\graph}+\identityMatrix{\nrVertices})),
\]
which gives the desired statement by induction on $\length$.

We finish by showing that~(\ref{enu:vc_LG_cycle_equivalent}) and~(\ref{enu:ec_LG_cycle_equivalent})
are equivalent, which is essentially due to the fact that $\lineGraphVertexColored{\graph}$
and $\lineGraphEdgeColored{\graph}$ have the same set of cycles,
i.e., closed walks on their vertices. Proceeding as above, we note
that $\trace(\adjacencyMatrix{\{\colorEdge,\other{\colorEdge}\}}{\lineGraphEdgeColored{\graph}})=0$
for every $\colorEdge,\other{\colorEdge}\in\{1,2,\ldots,\nrColors\}$
with $\colorEdge\neq\other{\colorEdge}$. Also, $\adjacencyMatrix{\{\colorEdge_{1},\other{\colorEdge}_{1}\}}{\lineGraphEdgeColored{\graph}}\adjacencyMatrix{\{\colorEdge_{2},\other{\colorEdge}_{2}\}}{\lineGraphEdgeColored{\graph}}=0$
whenever $\{\colorEdge_{1},\other{\colorEdge}_{1}\}\cap\{\colorEdge_{2},\other{\colorEdge}_{2}\}=\varnothing$.
Thus, 
\[
\trace(\adjacencyMatrix{\{\colorEdge_{1},\other{\colorEdge}_{1}\}}{\lineGraphEdgeColored{\graph}}\adjacencyMatrix{\{\colorEdge_{2},\other{\colorEdge}_{2}\}}{\lineGraphEdgeColored{\graph}}\cdots\adjacencyMatrix{\{\colorEdge_{\length},\other{\colorEdge}_{\length}\}}{\lineGraphEdgeColored{\graph}})=0
\]
 unless $\{\colorEdge_{1},\other{\colorEdge}_{1}\}\cap\{\colorEdge_{\length},\other{\colorEdge}_{\length}\}\neq\varnothing$
and $\{\colorEdge_{i},\other{\colorEdge}_{i}\}\cap\{\colorEdge_{i+1},\other{\colorEdge}_{i+1}\}\neq\varnothing$
for $i\in\{1,2,\ldots,\length-1\}$. Due to the cyclic invariance
of the trace, every possibly non-zero trace is of the form
\[
\trace\big((\adjacencyMatrix{\{\colorEdge,\other{\colorEdge}\}}{\lineGraphEdgeColored{\graph}})^{j}\big)=2\,\trace\big((\adjacencyMatrix{\colorEdge}{\lineGraphVertexColored{\graph}}\adjacencyMatrix{\other{\colorEdge}}{\lineGraphVertexColored{\graph}})^{j}\big),\qquad\text{where }1\leq\colorEdge<\other{\colorEdge}\leq\nrColors\text{ and }j\geq1,
\]
or
\[
\trace(\adjacencyMatrix{\{\colorEdge_{1},\colorEdge_{2}\}}{\lineGraphEdgeColored{\graph}}\adjacencyMatrix{\{\colorEdge_{2},\colorEdge_{3}\}}{\lineGraphEdgeColored{\graph}}\cdots\adjacencyMatrix{\{\colorEdge_{\length},\colorEdge_{1}\}}{\lineGraphEdgeColored{\graph}})=\trace(\adjacencyMatrix{\colorEdge_{1}}{\lineGraphVertexColored{\graph}}\adjacencyMatrix{\colorEdge_{2}}{\lineGraphVertexColored{\graph}}\cdots\adjacencyMatrix{\colorEdge_{\length}}{\lineGraphVertexColored{\graph}}),\qquad\text{where }\colorEdge_{1}\neq\colorEdge_{3}.
\]
The first equation follows from the observation that $[(\adjacencyMatrix{\colorEdge}{\lineGraphVertexColored{\graph}}\adjacencyMatrix{\other{\colorEdge}}{\lineGraphVertexColored{\graph}})^{j}]_{\edge\edge}\neq0$
only if vertex $\edge$ of $\lineGraphVertexColored{\graph}$ has
color $\colorEdge$, in which case it equals the number of closed
walks that start at $\edge$ and have color sequence $\colorEdge,\other{\colorEdge},\colorEdge\ldots,\other{\colorEdge}$,
likewise for $[(\adjacencyMatrix{\other{\colorEdge}}{\lineGraphVertexColored{\graph}}\adjacencyMatrix{\colorEdge}{\lineGraphVertexColored{\graph}})^{j}]_{\edge\edge}$.
Thus,
\[
[(\adjacencyMatrix{\{\colorEdge,\other{\colorEdge}\}}{\lineGraphEdgeColored{\graph}})^{j}]_{\edge\edge}=[(\adjacencyMatrix{\colorEdge}{\lineGraphVertexColored{\graph}}\adjacencyMatrix{\other{\colorEdge}}{\lineGraphVertexColored{\graph}})^{j}]_{\edge\edge}+[(\adjacencyMatrix{\other{\colorEdge}}{\lineGraphVertexColored{\graph}}\adjacencyMatrix{\colorEdge}{\lineGraphVertexColored{\graph}})^{j}]_{\edge\edge}.
\]
The second equation follows in a similar fashion since $[\adjacencyMatrix{\{\colorEdge_{1},\colorEdge_{2}\}}{\lineGraphEdgeColored{\graph}}\adjacencyMatrix{\{\colorEdge_{2},\colorEdge_{3}\}}{\lineGraphEdgeColored{\graph}}\cdots\adjacencyMatrix{\{\colorEdge_{\length},\colorEdge_{1}\}}{\lineGraphEdgeColored{\graph}}]_{\edge}\neq0$
only if vertex $\edge$ of $\lineGraphVertexColored{\graph}$ has
color $\colorEdge_{1}$.
\end{proof}
Theorem~\ref{thm:Tranplantable_manifolds} and Lemma~\ref{lem:Cycle_equivalences}
show that~(\ref{enu:SG_transplantable}),~(\ref{enu:SG_cycle_equivalent}),~(\ref{enu:vc_LG_cycle_equivalent}),
and~(\ref{enu:ec_LG_cycle_equivalent}) in Theorem~\ref{thm:Main_theorem}
are equivalent. Since transplantability of graphs implies their cycle
equivalence, e.g., (\ref{enu:vc_LG_transplantable})$\Rightarrow$(\ref{enu:vc_LG_cycle_equivalent})
and (\ref{enu:ec_LG_transplantable})$\Rightarrow$(\ref{enu:ec_LG_cycle_equivalent}),
Theorem~\ref{thm:Main_theorem} will be proven once we have shown
that~(\ref{enu:SG_transplantable}) implies the existence of a block
diagonal transplantation matrix for both~(\ref{enu:vc_LG_transplantable})
and~(\ref{enu:ec_LG_transplantable}) as claimed, i.e., (\ref{enu:SG_transplantable})$\Rightarrow$(\ref{enu:vc_LG_transplantable})
and (\ref{enu:SG_transplantable})$\Rightarrow$(\ref{enu:ec_LG_transplantable}).

In the following, let $\graph$ and $\mate{\graph}$ be transplantable
edge-colored loop-signed graphs with $\nrVertices\times\nrVertices$
adjacency matrices $(\adjacencyMatrix{\colorEdge}{\graph})_{\colorEdge=1}^{\nrColors}$
and $(\adjacencyMatrix{\colorEdge}{\mate{\graph}})_{\colorEdge=1}^{\nrColors}$,
respectively. Let $\transplantationMatrix\in\mathbb{R}^{\nrVertices\times\nrVertices}$
be an invertible transplantation matrix satisfying $\adjacencyMatrix{\colorEdge}{\graph}\transplantationMatrix=\transplantationMatrix\adjacencyMatrix{\colorEdge}{\mate{\graph}}$
for every $\colorEdge\in\{1,2,\ldots,\nrColors\}$. In particular,
\[
\nrVerticesLineGraph{\colorEdge}=\frac{\trace(\identityMatrix{\nrVertices}+\adjacencyMatrix{\colorEdge}{\graph})}{2}=\frac{\trace(\identityMatrix{\nrVertices}+\adjacencyMatrix{\colorEdge}{\mate{\graph}})}{2},
\]
which equals the number of $\colorEdge$-colored links and $N$-loops
of $\graph$ or $\mate{\graph}$, respectively. Each of the graphs
$\lineGraphVertexColored{\graph}$ and $\lineGraphVertexColored{\mate{\graph}}$
has $\nrVerticesLineGraph{}=\nrVerticesLineGraph 1+\nrVerticesLineGraph 2+\ldots+\nrVerticesLineGraph{\nrColors}$
vertices, which we number accordingly, i.e., the respective first
$\nrVerticesLineGraph 1$ vertices have color $1$, followed by $\nrVerticesLineGraph 2$
vertices of color~$2$, and so on. Let $\edge,\mate{\edge}\in\{1,2,\ldots,\nrVerticesLineGraph{}\}$.
We denote the color of edge $\edge$ of $\withoutDirichlet{\graph}$
by $\colorEdge$, and its incident vertices by $\vertex$ and $\other{\vertex}$,
where $\vertex=\other{\vertex}$ if it is an $N$-loop. Analogously,
we let edge $\mate{\edge}$ of $\withoutDirichlet{(\mate{\graph})}$
have color $\mate{\colorEdge}$ and possibly identical incident vertices
$\mate{\vertex}$ and $\mate{\other{\vertex}}$. Then, 
\[
[\adjacencyMatrix{\colorEdge}{\graph}]_{\vertex\other{\vertex}}=[\adjacencyMatrix{\colorEdge}{\graph}]_{\other{\vertex}\vertex}=[\adjacencyMatrix{\colorEdge}{\mate{\graph}}]_{\mate{\vertex}\mate{\other{\vertex}}}=[\adjacencyMatrix{\colorEdge}{\mate{\graph}}]_{\mate{\other{\vertex}}\mate{\vertex}}=1
\]
are the only non-vanishing entries in their respective row and column.
In particular,
\[
[\transplantationMatrix]_{\mate{\vertex\vertex}}=[\adjacencyMatrix{\colorEdge}{\graph}\transplantationMatrix]_{\other{\vertex}\mate{\vertex}}=[\transplantationMatrix\adjacencyMatrix{\colorEdge}{\mate{\graph}}]_{\other{\vertex}\mate{\vertex}}=[\transplantationMatrix]_{\other{\vertex}\mate{\other{\vertex}}}\quad\text{and}\quad[\transplantationMatrix]_{\mate{\other{\vertex}\vertex}}=[\adjacencyMatrix{\colorEdge}{\graph}\transplantationMatrix]_{\mate{\vertex\vertex}}=[\transplantationMatrix\adjacencyMatrix{\colorEdge}{\mate{\graph}}]_{\mate{\vertex\vertex}}=[\transplantationMatrix]_{\mate{\vertex\other{\vertex}}}.
\]
Hence,
\[
[\transplantationMatrix]_{\vertex\mate{\vertex}}+[\transplantationMatrix]_{\mate{\vertex\other{\vertex}}}=[\transplantationMatrix]_{\mate{\other{\vertex}\vertex}}+[\transplantationMatrix]_{\other{\vertex}\mate{\other{\vertex}}}=[\transplantationMatrix]_{\mate{\vertex\vertex}}+[\transplantationMatrix]_{\mate{\other{\vertex}\vertex}}=[\transplantationMatrix]_{\mate{\vertex\other{\vertex}}}+[\transplantationMatrix]_{\mate{\other{\vertex}\other{\vertex}}},
\]
which allows to define a transplantation matrix for the line graphs
associated with $\graph$ and~$\mate{\graph}$.
\begin{defn}
The line graph transplantation matrix $\transplantationMatrixLineGraphs\in\mathbb{R}^{\nrVerticesLineGraph{}\times\nrVerticesLineGraph{}}$
coming from $\transplantationMatrix$ is given~by
\[
[\transplantationMatrixLineGraphs]_{\edge\mate{\edge}}=\begin{cases}
[\transplantationMatrix]_{\vertex\mate{\vertex}}+[\transplantationMatrix]_{\mate{\vertex\other{\vertex}}}=[\transplantationMatrix]_{\mate{\vertex\vertex}}+[\transplantationMatrix]_{\mate{\other{\vertex}\vertex}} & \text{if }\colorEdge=\mate{\colorEdge}\text{ and }\mate{\vertex}\neq\mate{\other{\vertex}},\\
{}[\transplantationMatrix]_{\vertex\mate{\vertex}}=[\transplantationMatrix]_{\mate{\vertex\other{\vertex}}}=[\transplantationMatrix]_{\mate{\other{\vertex}\vertex}}=[\transplantationMatrix]_{\mate{\other{\vertex}\other{\vertex}}} & \text{if }\colorEdge=\mate{\colorEdge}\text{ and }\mate{\vertex}=\mate{\other{\vertex}},\\
0 & \text{otherwise.}
\end{cases}
\]

\end{defn}
For later reference, we note that if $\edge$ was a $\colorEdge$-colored
$D$-loop, i.e., if $[\adjacencyMatrix{\colorEdge}{\graph}]_{\vertex\vertex}=-1$,
then
\[
[\transplantationMatrix]_{\mate{\vertex\vertex}}=-[\adjacencyMatrix{\colorEdge}{\graph}\transplantationMatrix]_{\vertex\mate{\vertex}}=-[\transplantationMatrix\adjacencyMatrix{\colorEdge}{\mate{\graph}}]_{\vertex\mate{\vertex}}=-[\transplantationMatrix]_{\vertex\mate{\other{\vertex}}},\quad\text{so that }[\transplantationMatrix]_{\vertex\mate{\vertex}}+[\transplantationMatrix]_{\mate{\vertex\other{\vertex}}}=0.
\]
Similarly, if $\mate{\edge}$ was a $\colorEdge$-colored $D$-loop,
i.e., if $[\adjacencyMatrix{\colorEdge}{\mate{\graph}}]_{\mate{\vertex}\mate{\vertex}}=-1$,
then
\[
[\transplantationMatrix]_{\mate{\vertex\vertex}}=[\adjacencyMatrix{\colorEdge}{\graph}\transplantationMatrix]_{\other{\vertex}\mate{\vertex}}=[\transplantationMatrix\adjacencyMatrix{\colorEdge}{\mate{\graph}}]_{\other{\vertex}\mate{\vertex}}=-[\transplantationMatrix]_{\other{\vertex}\mate{\vertex}},\quad\text{so that }[\transplantationMatrix]_{\mate{\vertex\vertex}}+[\transplantationMatrix]_{\other{\vertex}\mate{\vertex}}=0.
\]

\begin{lem}
The line graph transplantation matrix $\transplantationMatrixLineGraphs$
is invertible and satisfies
\begin{eqnarray*}
\adjacencyMatrix{\colorEdge}{\lineGraphVertexColored{\graph}}\transplantationMatrixLineGraphs & = & \transplantationMatrixLineGraphs\adjacencyMatrix{\colorEdge}{\lineGraphVertexColored{\mate{\graph}}}\quad\text{for every }\colorEdge\in\{1,2,\ldots,\nrColors\},\text{ as well as}\\
\adjacencyMatrix{\{\colorEdge,\other{\colorEdge}\}}{\lineGraphEdgeColored{\graph}}\transplantationMatrixLineGraphs & = & \transplantationMatrixLineGraphs\adjacencyMatrix{\{\colorEdge,\other{\colorEdge}\}}{\lineGraphEdgeColored{\mate{\graph}}}\quad\text{for every }\colorEdge,\other{\colorEdge}\in\{1,2,\ldots,\nrColors\}\text{ with }\colorEdge\neq\other{\colorEdge}.
\end{eqnarray*}
\end{lem}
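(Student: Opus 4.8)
The plan is to deduce both intertwining relations and invertibility from two incidence identities relating $\transplantationMatrixLineGraphs$ to $\transplantationMatrix$, together with the block structure of $\transplantationMatrixLineGraphs$. Throughout I would let $\mate{\incidenceMatrix 1},\mate{\incidenceMatrix 2}$ denote the weighted incidence matrices of $\withoutDirichlet{(\mate{\graph})}$, and observe that the matrices $\colorMatrix{\colorEdge}$ agree for $\graph$ and $\mate{\graph}$, since $\nrVerticesLineGraph{\colorEdge}$ is the same for both and the line-graph vertices are numbered by color. Because $[\transplantationMatrixLineGraphs]_{\edge\mate{\edge}}=0$ unless $\edge$ and $\mate{\edge}$ have the same color, $\transplantationMatrixLineGraphs$ is block diagonal with respect to color, i.e. $\colorMatrix{\colorEdge}\transplantationMatrixLineGraphs=\transplantationMatrixLineGraphs\colorMatrix{\colorEdge}$ for every $\colorEdge$; its diagonal blocks then have exactly the sizes $(\nrVerticesLineGraph{\colorEdge})_{\colorEdge=1}^{\nrColors}$ claimed in Theorem~\ref{thm:Main_theorem}.

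First I would establish the two identities
\[
\incidenceMatrix 1\transplantationMatrixLineGraphs=\transplantationMatrix\mate{\incidenceMatrix 1}\qquad\text{and}\qquad\incidenceMatrix 2^{T}\transplantationMatrix=\transplantationMatrixLineGraphs\mate{\incidenceMatrix 2}^{T}.
\]
Both are verified entry by entry, exactly as in Lemma~\ref{lem:Incidence_decomposition}. For the first, one fixes a vertex $\vertex$ of $\graph$ and an edge $\mate{\edge}$ of $\withoutDirichlet{(\mate{\graph})}$; the left-hand entry receives a contribution only from the unique edge of $\withoutDirichlet{\graph}$ incident to $\vertex$ whose color matches that of $\mate{\edge}$, and the defining formula for $\transplantationMatrixLineGraphs$ rewrites this contribution as precisely the sum of entries of $\transplantationMatrix$ appearing on the right. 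The second identity is handled the same way with the roles of rows and columns exchanged. The only delicate cases occur when the relevant $\colorEdge$-colored edge is a $D$-loop, so that it is absent from $\withoutDirichlet{\graph}$ and contributes nothing on one side; here one invokes precisely the two vanishing relations recorded immediately before the lemma, which guarantee that the matching combination of entries of $\transplantationMatrix$ also vanishes. I expect this to be the main obstacle, both because it requires noticing the asymmetric pairing --- $\incidenceMatrix 1$ (weight $1$) multiplies $\transplantationMatrixLineGraphs$ while $\incidenceMatrix 2$ (weight $2$) multiplies $\transplantationMatrix$, and the other two combinations are \emph{false} --- and because the link, $N$-loop, and $D$-loop sub-cases must be checked separately.

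Granting the two identities, the intertwining relations follow formally. Applying the first identity and then the second gives
\[
\incidenceMatrix 2^{T}\incidenceMatrix 1\transplantationMatrixLineGraphs=\incidenceMatrix 2^{T}\transplantationMatrix\mate{\incidenceMatrix 1}=\transplantationMatrixLineGraphs\mate{\incidenceMatrix 2}^{T}\mate{\incidenceMatrix 1},
\]
and combining this with $\colorMatrix{\colorEdge}\transplantationMatrixLineGraphs=\transplantationMatrixLineGraphs\colorMatrix{\colorEdge}$ and the formula $\adjacencyMatrix{\colorEdge}{\lineGraphVertexColored{\graph}}=\colorMatrix{\colorEdge}(\incidenceMatrix 2^{T}\incidenceMatrix 1-2\identityMatrix{\nrVerticesLineGraph{}})$ from Lemma~\ref{lem:Incidence_decomposition} yields $\adjacencyMatrix{\colorEdge}{\lineGraphVertexColored{\graph}}\transplantationMatrixLineGraphs=\transplantationMatrixLineGraphs\adjacencyMatrix{\colorEdge}{\lineGraphVertexColored{\mate{\graph}}}$, which is the vertex-colored case. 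For the edge-colored case I would first record the identity $\adjacencyMatrix{\{\colorEdge,\other{\colorEdge}\}}{\lineGraphEdgeColored{\graph}}=\adjacencyMatrix{\colorEdge}{\lineGraphVertexColored{\graph}}\colorMatrix{\other{\colorEdge}}+\adjacencyMatrix{\other{\colorEdge}}{\lineGraphVertexColored{\graph}}\colorMatrix{\colorEdge}$, which is immediate from the definitions since $[\adjacencyMatrix{\colorEdge}{\lineGraphVertexColored{\graph}}]_{\edge\mate{\edge}}$ is nonzero only when $\edge$ has color $\colorEdge$; the vertex-colored relation together with $\colorMatrix{\colorEdge}\transplantationMatrixLineGraphs=\transplantationMatrixLineGraphs\colorMatrix{\colorEdge}$ then transports this identity verbatim to the intertwining statement for $\lineGraphEdgeColored{\graph}$ and $\lineGraphEdgeColored{\mate{\graph}}$.

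Finally, for invertibility I would use only the first identity and the block structure. Writing any $x$ in the kernel of $\transplantationMatrixLineGraphs$ as $x=\sum_{\colorEdge}\colorMatrix{\colorEdge}x$, block diagonality gives $\transplantationMatrixLineGraphs\colorMatrix{\colorEdge}x=0$ for each $\colorEdge$; multiplying by $\incidenceMatrix 1$ and using $\incidenceMatrix 1\transplantationMatrixLineGraphs=\transplantationMatrix\mate{\incidenceMatrix 1}$ gives $\transplantationMatrix\mate{\incidenceMatrix 1}\colorMatrix{\colorEdge}x=0$, hence $\mate{\incidenceMatrix 1}\colorMatrix{\colorEdge}x=0$ because $\transplantationMatrix$ is invertible. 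Since every vertex of $\mate{\graph}$ carries exactly one $\colorEdge$-colored edge of $\withoutDirichlet{(\mate{\graph})}$, the $\colorEdge$-colored columns of $\mate{\incidenceMatrix 1}$ have pairwise disjoint supports and are therefore linearly independent, forcing $\colorMatrix{\colorEdge}x=0$. Thus $x=0$, so $\transplantationMatrixLineGraphs$ is injective and, being square, invertible.
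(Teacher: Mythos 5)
Your proposal is correct, and it takes a genuinely different route from the paper's proof. The paper argues directly at the level of matrix entries: the relation $\adjacencyMatrix{\colorEdge}{\lineGraphVertexColored{\graph}}\transplantationMatrixLineGraphs=\transplantationMatrixLineGraphs\adjacencyMatrix{\colorEdge}{\lineGraphVertexColored{\mate{\graph}}}$ is checked case by case (links, $N$-loops, $D$-loops, and whether $\mate{\vertex}=\mate{\other{\vertex}}$), the edge-colored relation is then reduced row by row to the vertex-colored one using the block structure of $\transplantationMatrixLineGraphs$, and invertibility is proved by a bespoke computation: given a vanishing combination of the rows of a diagonal block of $\transplantationMatrixLineGraphs$, the coefficients are rescaled (by $\frac{1}{2}$ on links, $1$ on $N$-loops, $0$ on $D$-loops) so that the combination collapses to $2\sum_{\vertex}\other{a}_{\edge(\vertex,\colorEdge)}[\transplantationMatrix]_{\vertex\mate{\vertex}}$, and linear independence of the rows of $\transplantationMatrix$ finishes. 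You instead concentrate all of the case analysis into the two incidence identities $\incidenceMatrix 1\transplantationMatrixLineGraphs=\transplantationMatrix\mate{\incidenceMatrix 1}$ and $\incidenceMatrix 2^{T}\transplantationMatrix=\transplantationMatrixLineGraphs\mate{\incidenceMatrix 2}^{T}$, after which everything else is short formal algebra: the vertex-colored relation via Lemma~\ref{lem:Incidence_decomposition}, the edge-colored relation via the (easily verified) identity $\adjacencyMatrix{\{\colorEdge,\other{\colorEdge}\}}{\lineGraphEdgeColored{\graph}}=\adjacencyMatrix{\colorEdge}{\lineGraphVertexColored{\graph}}\colorMatrix{\other{\colorEdge}}+\adjacencyMatrix{\other{\colorEdge}}{\lineGraphVertexColored{\graph}}\colorMatrix{\colorEdge}$ together with $\colorMatrix{\colorEdge}\transplantationMatrixLineGraphs=\transplantationMatrixLineGraphs\colorMatrix{\colorEdge}$, and invertibility from injectivity of $\transplantationMatrix$ plus linear independence of the $\colorEdge$-colored columns of $\mate{\incidenceMatrix 1}$. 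I checked your two incidence identities entry by entry and they do hold, with the $D$-loop cases handled exactly by the two vanishing relations recorded before the lemma, and your warning about the asymmetric pairing is well placed: pairing $\incidenceMatrix 2$ with $\transplantationMatrixLineGraphs$ (or $\incidenceMatrix 1$ with $\transplantationMatrix$) fails on $N$-loops. Your approach buys modularity and transparency --- it reuses Lemma~\ref{lem:Incidence_decomposition}, which the paper's own proof of this lemma never invokes, and it makes structurally visible why the definition of $\transplantationMatrixLineGraphs$ is the right one --- at the cost of verifying two auxiliary identities; the paper's approach needs no auxiliary identities but repeats similar entry-level case analysis in both halves of the proof.

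One small inaccuracy, which does not affect your argument: in the invertibility step, a vertex of $\mate{\graph}$ carries \emph{at most} one $\colorEdge$-colored edge of $\withoutDirichlet{(\mate{\graph})}$, not exactly one, since the $\colorEdge$-colored edge at that vertex may be a $D$-loop and hence removed. What you actually use is that distinct $\colorEdge$-colored columns of $\mate{\incidenceMatrix 1}$ are nonzero with pairwise disjoint supports, and that remains true.
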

\begin{proof}
For each $\colorEdge\in\{1,2,\ldots,\nrColors\}$, let $\setOfColoredEdges{\colorEdge}=\{\nrVerticesLineGraph 1+\nrVerticesLineGraph 2+\ldots+\nrVerticesLineGraph{\colorEdge-1}+\edge^{\colorEdge}\mid\edge^{\colorEdge}=1,2,\ldots,\nrVerticesLineGraph{\colorEdge}\}$,
which corresponds to the $\colorEdge$-colored edges of $\withoutDirichlet{\graph}$
as well as to the $\colorEdge$-colored edges of $\withoutDirichlet{(\mate{\graph})}$,
which in turn correspond to the $\colorEdge$-colored vertices of
$\lineGraphVertexColored{\graph}$ and $\lineGraphVertexColored{\mate{\graph}}$,
respectively. In~order to show that the block diagonal matrix $\transplantationMatrixLineGraphs$
is invertible, it suffices to show that each of its $\nrColors$ diagonal
blocks has linearly independent rows. Let $\colorEdge\in\{1,2,\ldots,\nrColors\}$,
and assume that $(a_{\edge})_{\edge\in\setOfColoredEdges{\colorEdge}}\in\mathbb{R}^{\nrVerticesLineGraph{\colorEdge}}$
satisfies 
\[
0=\sum_{\edge\in\setOfColoredEdges{\colorEdge}}a_{\edge}[\transplantationMatrixLineGraphs]_{\edge\mate{\edge}}\quad\text{for every }\mate{\edge}\in\setOfColoredEdges{\colorEdge}.
\]
As before, we let edge $\mate{\edge}\in\setOfColoredEdges{\colorEdge}$
of $\withoutDirichlet{(\mate{\graph})}$ have possibly identical incident
vertices $\mate{\vertex}$ and $\mate{\other{\vertex}}$. We~first
consider the case $\mate{\vertex}\neq\mate{\other{\vertex}}$. Recall
that every vertex~$\vertex$ of $\graph$ has exactly one incident
$\colorEdge$-colored edge, which we denote by $\edge(\vertex,\colorEdge)$.
Let
\[
\other a_{\edge(\vertex,\colorEdge)}=\begin{cases}
\frac{1}{2}a_{\edge(\vertex,\colorEdge)} & \text{if }\edge(\vertex,\colorEdge)\text{ is a link},\\
a_{\edge(\vertex,\colorEdge)} & \text{if }\edge(\vertex,\colorEdge)\text{ is an }N\text{-loop},\\
0 & \text{if }\edge(\vertex,\colorEdge)\text{ is a }D\text{-loop}.
\end{cases}
\]
If $\edge=\edge(\vertex,\colorEdge)$ is an $N$-loop at $\vertex$,
then 
\[
a_{\edge}[\transplantationMatrixLineGraphs]_{\edge\mate{\edge}}=\other a_{\edge(\vertex,\colorEdge)}([\transplantationMatrix]_{\vertex\mate{\vertex}}+[\transplantationMatrix]_{\vertex\mate{\other{\vertex}}}),
\]
whereas if $\edge=\edge(\vertex,\colorEdge)=\edge(\other{\vertex},\colorEdge)$
is a link between $\vertex$ and $\other{\vertex}$, then
\[
a_{\edge}[\transplantationMatrixLineGraphs]_{\edge\mate{\edge}}=2\other a_{\edge}[\transplantationMatrixLineGraphs]_{\edge\mate{\edge}}=\other a_{\edge(\vertex,\colorEdge)}([\transplantationMatrix]_{\vertex\mate{\vertex}}+[\transplantationMatrix]_{\mate{\vertex\other{\vertex}}})+\other a_{\edge(\other{\vertex},\colorEdge)}([\transplantationMatrix]_{\other{\vertex}\mate{\vertex}}+[\transplantationMatrix]_{\mate{\other{\vertex}\other{\vertex}}}).
\]
Hence,
\[
0=\sum_{\vertex=1}^{\nrVertices}\other a_{\edge(\vertex,\colorEdge)}([\transplantationMatrix]_{\vertex\mate{\vertex}}+[\transplantationMatrix]_{\mate{\vertex\other{\vertex}}})=\sum_{\vertex=1}^{\nrVertices}\other a_{\edge(\vertex,\colorEdge)}[\transplantationMatrix]_{\vertex\mate{\vertex}}+\sum_{\other{\vertex}=1}^{\nrVertices}\other a_{\edge(\other{\vertex},\colorEdge)}[\transplantationMatrix]_{\mate{\other{\vertex}\other{\vertex}}}=2\sum_{\vertex=1}^{\nrVertices}\other a_{\edge(\vertex,\colorEdge)}[\transplantationMatrix]_{\vertex\mate{\vertex}},
\]
where in the last equality we used that each vertex $\other{\vertex}$
is the unique $c$-neighbor of some vertex~$\vertex$, meaning $[\adjacencyMatrix{\colorEdge}{\graph}]_{\vertex\other{\vertex}}\neq0$,
in which case $\other a_{\edge(\other{\vertex},\colorEdge)}[\transplantationMatrix]_{\mate{\other{\vertex}\other{\vertex}}}=\other a_{\edge(\vertex,\colorEdge)}[\transplantationMatrix]_{\mate{\vertex\vertex}}$.
The same arguments apply if $\mate{\vertex}=\mate{\other{\vertex}}$,
except for the summands involving $\mate{\other{\vertex}}$ which
disappear. Since the rows of $\transplantationMatrix$ are linearly
independent, we deduce that $\other a_{\edge(\vertex,\colorEdge)}=0$
for every $\vertex\in\{1,2,\ldots,\nrVertices\}$. In other words,
$(a_{\edge})_{\edge\in\setOfColoredEdges{\colorEdge}}=0$, which proves
that $\transplantationMatrixLineGraphs$ is invertible.

Next, we show that for every $\other{\colorEdge}\in\{1,2,\ldots,\nrColors\}$
and $\edge,\mate{\edge}\in\{1,2,\ldots,\nrVerticesLineGraph{}\}$
\[
[\adjacencyMatrix{\other{\colorEdge}}{\lineGraphVertexColored{\graph}}\transplantationMatrixLineGraphs]_{\edge\mate{\edge}}=[\transplantationMatrixLineGraphs\adjacencyMatrix{\other{\colorEdge}}{\lineGraphVertexColored{\mate{\graph}}}]_{\edge\mate{\edge}}.
\]
Let $\edge\in\setOfColoredEdges{\colorEdge}$ and $\mate{\edge}\in\setOfColoredEdges{\mate{\colorEdge}}$.
Since $\transplantationMatrixLineGraphs$ is block diagonal, we have
\[
[\adjacencyMatrix{\other{\colorEdge}}{\lineGraphVertexColored{\graph}}\transplantationMatrixLineGraphs]_{\edge\mate{\edge}}=\sum_{\other{\edge}\in\setOfColoredEdges{\mate{\colorEdge}}}[\adjacencyMatrix{\other{\colorEdge}}{\lineGraphVertexColored{\graph}}]_{\edge\other{\edge}}[\transplantationMatrixLineGraphs]_{\other{\edge}\mate{\edge}}\quad\text{and}\quad[\transplantationMatrixLineGraphs\adjacencyMatrix{\other{\colorEdge}}{\lineGraphVertexColored{\mate{\graph}}}]_{\edge\mate{\edge}}=\sum_{\mate{\other{\edge}}\in\setOfColoredEdges{\colorEdge}}[\transplantationMatrixLineGraphs]_{\edge\mate{\other{\edge}}}[\adjacencyMatrix{\other{\colorEdge}}{\lineGraphVertexColored{\mate{\graph}}}]_{\mate{\other{\edge}}\mate{\edge}}.
\]
If $\other{\colorEdge}\neq\colorEdge$ or $\other{\colorEdge}=\colorEdge=\mate{\colorEdge}$,
then $[\adjacencyMatrix{\other{\colorEdge}}{\lineGraphVertexColored{\graph}}]_{\edge\other{\edge}}=0$
for all $\other{\edge}\in\setOfColoredEdges{\mate{\colorEdge}}$,
and $[\adjacencyMatrix{\other{\colorEdge}}{\lineGraphVertexColored{\mate{\graph}}}]_{\mate{\other{\edge}}\mate{\edge}}=0$
for all $\mate{\other{\edge}}\in\setOfColoredEdges{\colorEdge}$.
It therefore suffices to consider the case $\other{\colorEdge}=\colorEdge\neq\mate{\colorEdge}$.
As before, we let edge $\edge$ of $\withoutDirichlet{\graph}$ and
edge $\mate{\edge}$ of $\withoutDirichlet{(\mate{\graph})}$ have
incident vertices $\{\vertex,\other{\vertex}\}$ and $\{\mate{\vertex},\mate{\other{\vertex}}\}$,
respectively. Note that each of the sums above has at most $2$ non-vanishing
summands, which correspond to the $\mate{\colorEdge}$-colored edges
at $\vertex$ and $\other{\vertex}$, as well as the $\colorEdge$-colored
edges at $\mate{\vertex}$ and $\mate{\other{\vertex}}$, respectively.
Regardless of whether these edges are links, $N$-loops, or $D$-loops,
if $\mate{\vertex}\neq\mate{\other{\vertex}}$, then 
\[
[\adjacencyMatrix{\other{\colorEdge}}{\lineGraphVertexColored{\graph}}\transplantationMatrixLineGraphs]_{\edge\mate{\edge}}=[\transplantationMatrix]_{\vertex\mate{\vertex}}+[\transplantationMatrix]_{\mate{\vertex\other{\vertex}}}+[\transplantationMatrix]_{\other{\vertex}\mate{\vertex}}+[\transplantationMatrix]_{\mate{\other{\vertex}\other{\vertex}}}=[\transplantationMatrixLineGraphs\adjacencyMatrix{\other{\colorEdge}}{\lineGraphVertexColored{\mate{\graph}}}]_{\edge\mate{\edge}},
\]
whereas if $\mate{\vertex}=\mate{\other{\vertex}}$, then
\[
[\adjacencyMatrix{\other{\colorEdge}}{\lineGraphVertexColored{\graph}}\transplantationMatrixLineGraphs]_{\edge\mate{\edge}}=[\transplantationMatrix]_{\vertex\mate{\vertex}}+[\transplantationMatrix]_{\other{\vertex}\mate{\vertex}}=[\transplantationMatrixLineGraphs\adjacencyMatrix{\other{\colorEdge}}{\lineGraphVertexColored{\mate{\graph}}}]_{\edge\mate{\edge}}.
\]

Finally, we show that for every $\colorEdge,\other{\colorEdge}\in\{1,2,\ldots,\nrColors\}$
with $\colorEdge\neq\other{\colorEdge}$ and $\edge\in\{1,2,\ldots,\nrVerticesLineGraph{}\}$
\[
[\adjacencyMatrix{\{\colorEdge,\other{\colorEdge}\}}{\lineGraphEdgeColored{\graph}}\transplantationMatrixLineGraphs]_{\edge}=[\transplantationMatrixLineGraphs\adjacencyMatrix{\{\colorEdge,\other{\colorEdge}\}}{\lineGraphEdgeColored{\mate{\graph}}}]_{\edge},
\]
where we reused the notation $[M]_{m}$ for row~$m$ of the matrix
$M$. We use the same idea as above and note that if $\edge\notin\setOfColoredEdges{\colorEdge}\cup\setOfColoredEdges{\other{\colorEdge}}$,
then $[\adjacencyMatrix{\{\colorEdge,\other{\colorEdge}\}}{\lineGraphEdgeColored{\graph}}]_{\edge}=0$,
i.e., $[\adjacencyMatrix{\{\colorEdge,\other{\colorEdge}\}}{\lineGraphEdgeColored{\graph}}\transplantationMatrixLineGraphs]_{\edge}=0$,
and $[\transplantationMatrixLineGraphs]_{\edge\mate{\other{\edge}}}\neq0$
only if $\mate{\other{\edge}}\notin\setOfColoredEdges{\colorEdge}\cup\setOfColoredEdges{\other{\colorEdge}}$,
i.e., $[\transplantationMatrixLineGraphs\adjacencyMatrix{\{\colorEdge,\other{\colorEdge}\}}{\lineGraphEdgeColored{\mate{\graph}}}]_{\edge}=0$.
If $\edge\in\setOfColoredEdges{\colorEdge}$, then $[\adjacencyMatrix{\{\colorEdge,\other{\colorEdge}\}}{\lineGraphEdgeColored{\graph}}]_{\edge}=[\adjacencyMatrix{\colorEdge}{\lineGraphVertexColored{\graph}}]_{\edge}$,
i.e., $[\adjacencyMatrix{\{\colorEdge,\other{\colorEdge}\}}{\lineGraphEdgeColored{\graph}}\transplantationMatrixLineGraphs]_{\edge}=[\adjacencyMatrix{\colorEdge}{\lineGraphVertexColored{\graph}}\transplantationMatrixLineGraphs]_{\edge}$,
and $[\transplantationMatrixLineGraphs]_{\edge\mate{\other{\edge}}}\neq0$
only if $\mate{\other{\edge}}\in\setOfColoredEdges{\colorEdge}$ in
which case we have $[\adjacencyMatrix{\{\colorEdge,\other{\colorEdge}\}}{\lineGraphEdgeColored{\mate{\graph}}}]_{\mate{\other{\edge}}}=[\adjacencyMatrix{\colorEdge}{\lineGraphVertexColored{\mate{\graph}}}]_{\mate{\other{\edge}}}$,
i.e., $[\transplantationMatrixLineGraphs\adjacencyMatrix{\{\colorEdge,\other{\colorEdge}\}}{\lineGraphEdgeColored{\mate{\graph}}}]_{\edge}=[\transplantationMatrixLineGraphs\adjacencyMatrix{\colorEdge}{\lineGraphVertexColored{\mate{\graph}}}]_{\edge}$.
Similarly, if $\edge\in\setOfColoredEdges{\other{\colorEdge}}$, then
\[
[\adjacencyMatrix{\{\colorEdge,\other{\colorEdge}\}}{\lineGraphEdgeColored{\graph}}\transplantationMatrixLineGraphs]_{\edge}=[\adjacencyMatrix{\other{\colorEdge}}{\lineGraphVertexColored{\graph}}\transplantationMatrixLineGraphs]_{\edge}=[\transplantationMatrixLineGraphs\adjacencyMatrix{\other{\colorEdge}}{\lineGraphVertexColored{\mate{\graph}}}]_{\edge}=[\transplantationMatrixLineGraphs\adjacencyMatrix{\{\colorEdge,\other{\colorEdge}\}}{\lineGraphEdgeColored{\mate{\graph}}}]_{\edge}.
\]

\end{proof}
\bibliographystyle{amsalpha}
\bibliography{line_graphs}

\providecommand{\bysame}{\leavevmode\hbox to3em{\hrulefill}\thinspace}
\providecommand{\MR}{\relax\ifhmode\unskip\space\fi MR }
\providecommand{\MRhref}[2]{%
  \href{http://www.ams.org/mathscinet-getitem?mr=#1}{#2}
}
\providecommand{\href}[2]{#2}
\begin{thebibliography}{GWW92}

\bibitem[BCDS94]{BuserConwayDoyleSemmler1994}
Peter Buser, John Conway, Peter Doyle, and Klaus-Dieter Semmler, \emph{Some
  planar isospectral domains}, Internat. Math. Res. Notices (1994), no.~9,
  391ff., approx.\ 9 pp.\ (electronic).

\bibitem[Bus86]{Buser1986}
Peter Buser, \emph{Isospectral {R}iemann surfaces}, Ann. Inst. Fourier
  (Grenoble) \textbf{36} (1986), no.~2, 167--192.

\bibitem[GWW92]{GordonWebbWolpert1992}
C.~Gordon, D.~Webb, and S.~Wolpert, \emph{Isospectral plane domains and
  surfaces via {R}iemannian orbifolds}, Invent. Math. \textbf{110} (1992),
  no.~1, 1--22.

\bibitem[Her14]{Herbrich2014}
Peter Herbrich, \emph{On inaudible properties of broken drums --
  {I}sospectrality with mixed {D}irichlet-{N}eumann boundary conditions},
  arXiv:1111.6789v3 (2014).

\bibitem[Kac66]{Kac1966}
Mark Kac, \emph{Can one hear the shape of a drum?}, Amer. Math. Monthly
  \textbf{73} (1966), no.~4, part II, 1--23.

\bibitem[MM03]{McDonaldMeyers2003}
Patrick McDonald and Robert Meyers, \emph{Isospectral polygons, planar graphs
  and heat content}, Proc. Amer. Math. Soc. \textbf{131} (2003), no.~11,
  3589--3599 (electronic).

\bibitem[OB12]{OrenBand2012}
Idan Oren and Ram Band, \emph{Isospectral graphs with identical nodal counts},
  J. Phys. A \textbf{45} (2012), no.~17, 135203, 12.

\bibitem[Sun85]{Sunada1985}
Toshikazu Sunada, \emph{Riemannian coverings and isospectral manifolds}, Ann.
  of Math. (2) \textbf{121} (1985), no.~1, 169--186.

\bibitem[Whi32]{Whitney1932}
Hassler Whitney, \emph{Congruent {G}raphs and the {C}onnectivity of {G}raphs},
  Amer. J. Math. \textbf{54} (1932), no.~1, 150--168.

\end{thebibliography}

\end{document}